\newtheorem{theorem}{Theorem}[section]
\newtheorem{lemma}[theorem]{Lemma}
\newtheorem{corollary}[theorem]{Corollary}
\newtheorem{prop}[theorem]{Proposition}
\newtheorem*{theorem*}{Theorem}{\bf}{\it}
\newtheorem*{proposition*}{Proposition}{\bf}{\it}
\newtheorem*{observation*}{Observation}{\bf}{\it}
\newtheorem*{lemma*}{Lemma}{\bf}{\it}
\theoremstyle{definition}
\theoremstyle{remark}
\newtheorem{remark}[theorem]{Remark}
\newcommand{\closure}[2][3]{%
  {}\mkern#1mu\overline{\mkern-#1mu#2}}
\newcommand{\Z}{\mathbb Z}
\newcommand{\R}{\mathbb R}
\newcommand{\D}{\mathbb D}
\newcommand{\dist}{{\rm{dist}}}
\newcommand{\tu}{\tilde{u}}
\newcommand{\tv}{\tilde{v}}
\newcommand{\tw}{\tilde{w}}
\newcommand{\tf}{\tilde{f}}
\newcommand{\tV}{\tilde{V}}
\def\Xint#1{\mathchoice
{\XXint\displaystyle\textstyle{#1}}%
{\XXint\textstyle\scriptstyle{#1}}%
{\XXint\scriptstyle\scriptscriptstyle{#1}}%
{\XXint\scriptscriptstyle\scriptscriptstyle{#1}}%
\!\int}
\def\XXint#1#2#3{{\setbox0=\hbox{$#1{#2#3}{\int}$ }
\vcenter{\hbox{$#2#3$ }}\kern-.6\wd0}}
\def\dashint{\Xint-}
\begin{document}
\title[Ratios of harmonic functions]{Ratios of harmonic functions with the same zero set}
\thanks{The main result, Theorem \ref{H}, and Sections \ref{se:4}, \ref{se:5}, \ref{se:6} were supported by the Russian Science Foundation grant 14-21-00035.
Theorem \ref{main} and Sections \ref{se:2}, \ref{se:3} were supported by Project 213638 of the Research Council of Norway.}
\author{Alexander Logunov}
\address{Chebyshev Laboratory, St.Petersburg State University, 14th Line 29B, Vasilyevsky Island, St.Petersburg 199178, Russia}
\address{School of Mathematical Sciences, Tel Aviv University Tel Aviv 69978, Israel}
\email{log239@yandex.ru}
\author{Eugenia Malinnikova}
\address{Department of Mathematical Sciences,
Norwegian University of Science and Technology
7491, Trondheim, Norway}
\email{eugenia@math.ntnu.no}
\keywords{Harmonic functions, Divisors of harmonic functions, Nodal set, Gradient estimates, {\L}ojasiewicz exponent}
\subjclass[2010]{31B05}

\begin{abstract}
  We study the ratio of harmonic functions $u,v$  which have the same zero set $Z$ in the unit ball $B\subset \mathbb{R}^n$. The ratio $f=u/v$ can be extended to a real analytic nowhere vanishing function in $B$. We prove the Harnack inequality and the gradient estimate for such ratios in any dimension: for a given compact set $K\subset B$ we show that $\sup_K|f|\le C_1\inf_K|f|$ and $\sup_K\left|\nabla f\right|\le C_2 \inf_K|f|$, where $C_1$ and $C_2$ depend on $K$ and $Z$ only. In dimension two we specify the dependence of the constants on $Z$ in these inequalities by showing that only the number of nodal domains of $u$, i.e. the number of connected components of $B\setminus Z$, plays a role. 
\end{abstract}
\maketitle


\section{Introduction}
\subsection{Ratios of harmonic functions and Harnack's inequalities}
 Let $u$ and $v$ be real-valued harmonic functions in a domain $\Omega\subset\R^n$. Suppose that the zero sets of $u$ and $v$ coincide: $Z(u)=Z(v)=Z$. Then one may consider the ratio $f=u/v$. It was conjectured by Dan Mangoubi \cite{M} that such ratios and their gradients satisfy the following  Harnack inequalities,
\begin{equation}\label{eq:H}
\sup_K|f|\le C_1\inf_K|f|,
\end{equation}
 and 
\begin{equation}\label{eq:GE}
\sup_K\left|\nabla f\right|\le C_2 \inf_K|f|
\end{equation}
where $K$ is a compact subset of $\Omega$ and the constants $C_1, C_2$ depend  on $K$ and the nodal set $Z$ only. 
The inequalities (\ref{eq:H}) and (\ref{eq:GE}) follow from the classical Harnack principle when $Z=\emptyset$. They were proved by Mangoubi in dimension two \cite{M} and then by the authors in dimension three \cite{LM}. In the present work we generalize the result to higher dimensions and refine the information of the constants in the above inequalities in dimension two. Connections of these inequalities to the boundary Harnack principle for harmonic functions were  discussed in \cite{M,LM}.

It was proved  in \cite{LM} that if $f$ is the ratio of two harmonic functions in $\Omega$ with the common zero set $Z$, then $f$, defined originally on $\Omega\setminus Z$, is the trace of a real analytic function in $\Omega$ that does not vanish and therefore has a constant sign in $\Omega$ (in the sequel we refer to this continuation as $f$). Furthermore, the maximum and minimum principles hold for $f$. This is not surprising, since the ratio of two harmonic functions is a solution of an elliptic equation (see \cite{M}, \cite{LM}), however, since this equation is highly degenerate, general known results are not applicable.

\subsection{Main results} The present work contains two independent results.
 First, we answer one of the questions posed in \cite{M}, by showing that in dimension two the constants in \eqref{eq:H} and \eqref{eq:GE}  depend  on the number of the  nodal domains, i.e. the number of connected components of $B\setminus Z$, only. Equivalently, we may say that the constants depend on the length of the nodal set only, see Remark \ref{r:length} below.    
\begin{theorem}\label{main}
 Let $u$ and $v$ be harmonic functions in the unit disc $\D \subset \mathbb{R}^2$ such that $Z(u)=Z(v)$ and suppose the number of nodal domains of $u$ (and $v$) is less than a fixed number $N$. 
 Let $f$ be the ratio of $u$ and $v$, then for any compact set $K \subset \D$ there exist constants $C_1=C_1(K,N)$ and $C_2=C_2(K,N)$ depending on $K$ and $N$ only  such that (\ref{eq:H}) and (\ref{eq:GE}) hold. 
\end{theorem}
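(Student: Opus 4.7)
I would argue by compactness and contradiction. Suppose the conclusion fails for some compact $K\subset\D$ and bound $N$: we find sequences $(u_n)$, $(v_n)$ of harmonic functions in $\D$ with common nodal sets $Z_n:=Z(u_n)=Z(v_n)$, at most $N$ nodal domains, and such that at least one of $\sup_K|f_n|/\inf_K|f_n|$ or $\sup_K|\nabla f_n|/\inf_K|f_n|$ diverges, where $f_n=u_n/v_n$. Using the freedom to rescale $u_n$ and $v_n$ by separate scalars (which leaves these ratios invariant), I normalize $\|u_n\|_{L^\infty(\D')}=\|v_n\|_{L^\infty(\D')}=1$ for some disc $\D'\Subset\D$ containing $K$. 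The hypothesis on the number of nodal domains is used crucially here: it gives a uniform bound on the doubling exponents of $u_n$ and $v_n$ (a standard two-dimensional fact), and hence via the three-ball inequality a uniform lower bound on $\|u_n\|_{L^\infty(\D'')}$ and $\|v_n\|_{L^\infty(\D'')}$ on a smaller disc $\D''\Subset\D'$ still containing $K$. Combined with interior regularity for harmonic functions, a subsequence satisfies $u_n\to u_*$, $v_n\to v_*$ locally uniformly on $\D''$, with $u_*,v_*$ nontrivial harmonic.

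The main step, and the principal obstacle, is the stability of the common nodal set: after extracting a further subsequence, $Z_n$ converges in Hausdorff distance on every compact of $\D''$ to a set $Z_*$, and $Z(u_*)=Z(v_*)=Z_*$. Blaschke's selection theorem (with a diagonal argument on a compact exhaustion) supplies the Hausdorff limit. The inclusion $Z_*\subseteq Z(u_*)\cap Z(v_*)$ is immediate from uniform convergence. For the reverse inclusion, if some $p\in Z(u_*)$ were at positive distance from $Z_n$ for infinitely many $n$, then $u_n$ would have a constant sign on a small ball $B_\rho(p)$; passing to a subsequence along which the sign is fixed, $u_*$ would be signed on $B_\rho(p)$ while vanishing at the interior point $p$, contradicting the strong maximum principle together with unique continuation and the nontriviality of $u_*$. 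The same argument applied to $v_*$ completes the identification of $Z_*$.

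With $u_*,v_*$ sharing the nodal set $Z_*$, Mangoubi's Harnack and gradient inequalities \cite{M} for the ratio with fixed nodal set yield finite constants $C_1(K,Z_*),C_2(K,Z_*)$ for $f_*:=u_*/v_*$. To complete the contradiction I would transfer these bounds to $f_n$ using the maximum and minimum principles for the real-analytic extension of $f_n$ established in \cite{LM}. Away from $Z_*$ the function $v_*$ is bounded below, so $f_n\to f_*$ locally uniformly there. For $p\in K\cap Z_*$ pick a radius $\rho_p$ with $\partial B_{\rho_p}(p)\cap Z_*=\emptyset$ (possible since $Z_*$ is one-dimensional); on such a circle $f_n\to f_*$ uniformly, and the maximum principle gives $\sup_{B_{\rho_p}(p)}|f_n|=\sup_{\partial B_{\rho_p}(p)}|f_n|\to\sup_{\partial B_{\rho_p}(p)}|f_*|$, with the analogous statement for the infimum. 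A finite cover of $K$ by such discs together with uniform convergence on compacts of $K\setminus Z_*$ gives $\sup_K|f_n|/\inf_K|f_n|\to\sup_K|f_*|/\inf_K|f_*|<\infty$, contradicting the assumed blow-up. The gradient estimate \eqref{eq:GE} follows by the same compactness mechanism using Cauchy-type interior bounds for the analytic extensions on discs whose boundaries avoid $Z_*$, combined with the corresponding gradient inequality for $f_*$.
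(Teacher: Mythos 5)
Your overall strategy --- compactness via the bound on nodal domains, passing to nonzero limits $u_*,v_*$ with common nodal set $Z_*$, and transferring Harnack/gradient bounds back along the convergence --- matches the architecture of the paper's proof. However, the transfer step has a genuine gap at a point the paper works hard to circumvent.

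You write: ``For $p\in K\cap Z_*$ pick a radius $\rho_p$ with $\partial B_{\rho_p}(p)\cap Z_*=\emptyset$ (possible since $Z_*$ is one-dimensional).'' This is false. In dimension two the nodal set of a nonzero harmonic function is a locally finite union of real-analytic arcs with no isolated points, so if $p\in Z_*$ then an arc of $Z_*$ passes through $p$, and \emph{every} sufficiently small circle $\partial B_\rho(p)$ centered at $p$ crosses $Z_*$. The one-dimensionality of $Z_*$ is precisely why such circles cannot avoid it. Consequently $v_*$ vanishes somewhere on each $\partial B_{\rho_p}(p)$, and the uniform convergence $f_n\to f_*$ on that circle --- which your transfer of the max/min principle relies on --- does not follow from the normal convergence of $u_n,v_n$. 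This gap also appears in your sketch of the gradient estimate (``discs whose boundaries avoid $Z_*$'').

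The paper's proof confronts exactly this obstacle. It chooses a circle $\partial\D_r\supset K$ whose boundary avoids the \emph{singular} set $Z_s$ of $Z$ (which is discrete), not $Z$ itself, and then explicitly treats the finitely many crossing points $z_i\in Z\cap\partial\D_r$. Near each $z_i$ it invokes the local division principle, Lemma~\ref{lda} from \cite{LM}: the lowest-order homogeneous harmonic polynomial in the Taylor expansion of $v_n$ at a nearby zero has a coefficient bounded away from zero (because $v_n\rightrightarrows v$ and $z_i\notin Z_s$), and Lemma~\ref{lda} then gives local Harnack estimates for $f_n$ around $z_i$ \emph{uniformly in $n$}, without ever converging $f_n$ on circles through $Z$. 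Elsewhere on $\partial\D_r$, away from $Z$, classical Harnack applies; the max/min principle for ratios then propagates the bound to $K$. For the gradient estimate the paper additionally proves a structure theorem (Theorem~\ref{th:struc}/Corollary~\ref{pr:struc}): after uniformizing so that $V(w)=w^k$, one can write $U_n=g_n\circ V_n$ with $g_n$ analytic with real coefficients, which yields the explicit expansion~\eqref{eq:ratio} for $f_n$ and a direct uniform bound on $|\nabla f_n|$. You would need some replacement for both the local division lemma and this structure theorem to make the transfer step go through; citing Mangoubi's estimates for the fixed limit pair $(u_*,v_*)$ alone does not give uniform control over the $f_n$ near $Z_*$.

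Two smaller points. First, invoking Hausdorff convergence of $Z_n$ and Blaschke selection is unnecessary baggage: the paper's Lemma~\ref{l:nodal} establishes $Z(u_*)=Z(v_*)$ directly from normal convergence and the sign of $f_n$, without introducing $Z_*$ as a Hausdorff limit. Second, your normalization via uniform doubling is a legitimate alternative route to Nadirashvili's compactness principle (Lemma~\ref{N2}), and that part of the argument is fine.
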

The proof uses some kind of compactness principle for harmonic functions with a bounded number of nodal domains. The principle holds in dimension two only and was proved by N.~Nadirashvili \cite{N98}. However, we don't see how the estimates (\ref{eq:H}) and (\ref{eq:GE}) with uniform constants would follow immediately from this principle. We use a structure theorem for analytic functions taking real values on a fixed set, information about the critical set,  as well as estimates from the local division argument in \cite{LM}, to complete the proof.

 Our main result gives the affirmative answer to another question of Mangoubi, \cite{M}. It contains the Harnack inequality (\ref{eq:H}) and the gradient estimate (\ref{eq:GE}) for the ratios of harmonic functions in any dimension as well as estimates for all partial derivatives of the ratios. We look at families of harmonic functions with common zeros and use the following notation. Let $B$ be the unit ball in $\mathbb{R}^n$ and $Z$ be its subset, we define 
 $$H_Z:=\{u: B \to\R: \Delta u=0, Z(u)=Z\}.$$
\begin{theorem} \label{H}
There exist constants $A=A(Z)>0$ and $R=R(Z)>0$ such that  for any $u,v \in H_Z$  and any multiindex $\alpha\in\Z_+^n$ the ratio $f=u/v$ satisfies
$$ \sup\limits_{B_{1/2}}|D^\alpha f| \leq \alpha! A R^{|\alpha|}\inf\limits_{B_{1/2}}|f|.$$
\end{theorem}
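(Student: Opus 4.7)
The plan is to show that $f$ extends to a holomorphic function $\tf$ on a complex tube $T=\{z=x+iy\in\C^n : x\in B_{1/2},\ |y|<\rho(Z)\}$ of uniform thickness, with sup-norm $\|\tf\|_T \le A(Z)\,\inf_{B_{1/2}}|f|$. Cauchy's estimates applied on polydiscs of radius $\rho(Z)/\sqrt{n}$ centered at real points of $B_{1/2}$ then immediately yield $|D^\alpha f(x)|\le \alpha!\,A(Z)\,R(Z)^{|\alpha|}\inf_{B_{1/2}}|f|$ with $R(Z)=\sqrt{n}/\rho(Z)$, which is the claimed bound.

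The first step is the case $\alpha=0$: the Harnack inequality $\sup_{B_{1/2}}|f|\le C(Z)\inf_{B_{1/2}}|f|$. Following the scheme of \cite{LM}, I would reduce this to a local Harnack bound near each point of $\overline{B_{1/2}}$. At a regular point $p\notin Z$, $|v|$ is bounded below in a neighborhood of $p$, so the classical Harnack inequality applied separately to the harmonic functions $u$ and $v$ suffices. At a zero point $p\in Z$, I would apply the local division argument of \cite{LM}: expand $u$ and $v$ into their harmonic Taylor series around $p$, use that $Z(u)=Z(v)$ locally to relate the leading homogeneous parts, and divide with explicit quantitative estimates. A finite cover of $\overline{B_{1/2}}$ then chains these local bounds (using that $f$ has constant sign, being non-vanishing real analytic) into the global Harnack, with constant depending only on $Z$.

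The second step passes from real to complex. Since $u,v$ are harmonic on $B$, their Poisson representations produce holomorphic extensions $\tu,\tv$ to an absolute complex tube $T_0=\{x+iy : x\in B_{3/4},\ |y|<\rho_0\}$, with sup-norms there controlled by $\|u\|_{L^\infty(B)}$ and $\|v\|_{L^\infty(B)}$. On $B$ we have the identity $u=fv$; uniqueness of analytic continuation then yields $\tu=\tf\cdot\tv$ throughout $T_0$ wherever $\tf$ is defined, so in particular $\tf=\tu/\tv$ is holomorphic wherever $\tv\neq 0$. The crucial point is that $\tf$ extends holomorphically across the complex zero variety of $\tv$ in some complex neighborhood of real $B_{1/2}$: because $u$ and $v$ share the real zero set $Z$, their complex extensions must have matching vanishing orders along the complex analytic subvariety cut out near real $Z$, so the quotient has no genuine poles nearby. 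Combining this with the first-step Harnack --- propagated to the complex tube via the maximum principle for $\tf$ --- gives the required uniform sup-norm bound for $\tf$ on a tube of thickness $\rho(Z)$.

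The main obstacle is this second step: to show that $\rho$ and $A$ can be chosen uniformly over all $u,v \in H_Z$, depending only on $Z$ and not on the particular pair. This demands quantitative control of the complex zero structure of $\tu$ and $\tv$ near real $Z$, uniformly over the family $H_Z$, and it is here that the harmonic structure and the division machinery of \cite{LM} must be adapted to the complex setting. Once the uniform complex extension is in hand, the derivative bound is a textbook consequence of the Cauchy formula.
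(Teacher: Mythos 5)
Your proposed architecture---prove the $\alpha=0$ Harnack inequality first, then upgrade to derivative bounds by analyticity---matches the overall shape of the paper's proof, but the plan has a genuine gap: it never supplies the uniformity mechanism that is the heart of the theorem, and the part of your plan that would have to carry that weight is the very part you flag as unresolved.

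Concretely: to run the local division argument (Lemma~\ref{lda}) at a point $p\in Z$ and get a local Harnack bound, you need three inputs that are \emph{uniform over the whole family} $H_Z$---an upper bound $\sup_{B_1}|u|,\sup_{B_1}|v|\le A$ after normalization, and a lower bound $|c_v|\ge\varepsilon$ on the leading coefficient of the Taylor expansion of $v$ at $p$. A priori these quantities depend on the particular pair $u,v$, not just on $Z$, so ``chain local bounds over a finite cover'' does not by itself produce constants depending only on $Z$. The paper resolves this with Proposition~\ref{ML} (proved via three lemmas using monotonicity of doubling constants and the \L ojasiewicz inequality for a fixed $v\in H_Z$): it shows that $\sup_{B_{1/16}}|u|$ is controlled by $\sup\{|u(y)|:y\in B_{1/2},\ \delta(y)\ge c\}$ with $M,c$ depending only on $Z$. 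This estimate is exactly what lets the paper normalize any $u\in H_Z$ at an interior point away from $Z$ (equation~\eqref{eq:N1}) and at the leading Taylor coefficient on $Z$ (equation~\eqref{eq:N2}) via a compactness argument, after which Lemma~\ref{lda} applies with uniform $A,R,\varepsilon$. None of this appears in your plan; it is the missing idea, not a detail.

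Your second step (holomorphic extension to a complex tube, then Cauchy estimates) is an alternative packaging of what the paper gets directly from the Taylor coefficient estimate~\eqref{eq:tay} in Lemma~\ref{lda}, since $|D^\alpha f(y)|\le \alpha!\,A_y R_y^{|\alpha|}$ is precisely a holomorphic extension of $f$ to a polydisc of radius $1/R_y$ around $y$; so this part, once you have~\eqref{eq:N1} and~\eqref{eq:N2}, is legitimate and essentially the same move. But as you yourself note, showing that $\tf=\tu/\tv$ extends across the complex zero variety of $\tv$ with uniform control is not a textbook step; it is the whole difficulty. Framing it as ``adapt the division machinery to the complex setting, uniformly over $H_Z$'' is just restating the theorem. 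The paper avoids the circularity by never leaving the real setting: the compactness argument plus Proposition~\ref{ML} pin down the normalization, and local division then gives Taylor coefficient bounds directly, with the covering of $\overline{B_{1/2}}$ handled at the very end. You should look at the doubling-constant/\L ojasiewicz argument of Section~\ref{se:5}; that is the new ingredient that makes the higher-dimensional case work and that your outline omits entirely.
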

  This theorem was proved for the three dimensional space in \cite{LM}. The argument therein  employed the boundary Harnack principle and the structure of the nodal sets of harmonic functions,  the latter becomes more complicated with the growth of the dimension and it is not clear if that proof can be generalized to higher dimensions. We suggest another approach here. 

 The main ingredients of the proof of Theorem \ref{H} include doubling constants for harmonic functions, the \L ojasiewicz exponents,  and some known techniques of potential theory; we refer in particular to Lemma 8.7.10 in \cite{AG}. Any mention of topology of the nodal set and  the boundary Harnack principle is avoided.

One result from \cite{LM} will be required in proofs of both theorems, we cite it here and will refer  to it as to the local division principle. First we note that if $v\in H_Z$ and $x_0\in Z$, then there exists a homogeneous harmonic polynomial $p=p(x_0,Z)$ of degree $k$ such that the Taylor expansion of  $v$ at $x_0$ is given by  
\[
v(x)=c_vp(x-x_0)+\sum_{|\alpha|>k}(\alpha!)^{-1}D^\alpha v(x_0)(x-x_0)^\alpha,\quad c_v\neq 0.\]
 The polynomial $p$ is the same for all $v \in H_Z$ (see Lemma 2.1 and Lemma 2.2 in \cite{LM}).

\begin{lemma} \label{lda}
 Let $u$ and $v$ be non-zero harmonic functions in the unit ball $B$ of $\R^n$ such that $Z(u)=Z(v)=Z$ and $0\in Z$ and let $f=u/v$ and $p=p(0,Z)$ as above. Suppose that $|c_v|>\varepsilon$, 
and $|D^\alpha  u|(0) \leq AR^{|\alpha|} \alpha!$, $ |D^\alpha  v|(0) \leq AR^{|\alpha|} \alpha! $ for any multi-index $\alpha \geq 0$.
 Then there exist $c,C>0$ and $r, \rho>0$ depending on $A$,$R$ and $\varepsilon$ only, such that
\begin{equation}\label{eq:tay}
|D^\alpha f(0)|\le Cr^\alpha \alpha!, 
\end{equation}
\begin{equation}\label{eq:lda} \sup\limits_{\rho B} |f| \leq c \inf\limits_{\rho B} |f|.\end{equation}
\end{lemma}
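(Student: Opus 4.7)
I would extend $u$ and $v$ to holomorphic functions on a complex polydisc of controlled radius, divide out the common leading harmonic polynomial $p$ to obtain holomorphic factors $U=u/p$ and $V=v/p$ with $U(0)=c_u$ and $V(0)=c_v$, and then use the lower bound $|c_v|>\varepsilon$ to control the ratio $f=U/V$ via Cauchy estimates.

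First, the Taylor coefficient bounds $|D^\alpha u(0)|,|D^\alpha v(0)|\le AR^{|\alpha|}\alpha!$ imply that $u$ and $v$ extend to bounded holomorphic functions on the complex polydisc $P_0=\{z\in\C^n:|z_j|\le 1/(2R)\}$, with supremum bounded by some $M=M(A,n)$. After an orthogonal change of coordinates (possible because $p\not\equiv 0$), we may assume $p(z_1,0,\ldots,0)=az_1^k$ with $a\neq 0$, so that $p$ is $z_1$-regular of degree $k$ and the Weierstrass machinery is available.

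Next comes the key analytic step, the division. Since the Taylor expansions of $u$ and $v$ at the origin begin with $c_u p$ and $c_v p$ (with no lower-order terms) and $Z(u)=Z(v)=Z$, a Weierstrass preparation-type argument on a slightly smaller polydisc $P_1$ yields holomorphic functions $U$ and $V$ with
\[
u=pU,\qquad v=pV,\qquad U(0)=c_u,\quad V(0)=c_v,
\]
and sup-norm bounds $|U|,|V|\le M'(A,R)$ on $P_1$. The coincidence of the real nodal sets is used here to eliminate a possible Weierstrass remainder; this is the technically delicate point of the argument.

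Since $|V(0)|=|c_v|>\varepsilon$ and $|V|\le M'$ on $P_1$, Cauchy's estimate for $V$ produces a concentric polydisc $P_2\subset P_1$ of radius $r_2=r_2(A,R,\varepsilon)$ on which $|V|>\varepsilon/2$. On $P_2$ the quotient $f=U/V$ is holomorphic with $|f|\le 2M'/\varepsilon$, and Cauchy's inequality yields $|D^\alpha f(0)|\le C\,r^{|\alpha|}\alpha!$ with $C=2M'/\varepsilon$ and $r=r_2^{-1}$, which is (\ref{eq:tay}). The local Harnack inequality (\ref{eq:lda}) then follows by choosing $\rho>0$ small enough that the Taylor remainder of $f$ on $\rho B$ is small compared to $|f(0)|$: one gets $|f(x)/f(0)-1|\le 1/2$ on $\rho B$, and hence $\sup_{\rho B}|f|\le 3\inf_{\rho B}|f|$. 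The main obstacle is the divisibility: $p$ does not a priori divide $u$ and $v$ in the ring of holomorphic germs, and the coincidence of the real nodal sets $Z(u)=Z(v)$ must be exploited to rule out the Weierstrass remainder, which is precisely where the hypothesis on $Z$ and the structural lemmas of \cite{LM} enter.
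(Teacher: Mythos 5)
The central step of your argument --- factoring $u = pU$ and $v = pV$ with $U,V$ holomorphic near $0$ --- is false in general. The leading homogeneous polynomial $p$ of a harmonic function need not divide that function as a germ: the higher-order terms of the Taylor expansion have no reason to lie in the ideal generated by $p$. A concrete counterexample in the plane is $v(x,y) = \Im(z^2 + z^3) = 2xy + 3x^2 y - y^3$, which is harmonic, vanishes to order $2$ at $0$, and has leading part $p = 2xy$; but $v = y\,(2x + 3x^2 - y^2)$, and $(2x + 3x^2 - y^2)/x$ is not holomorphic at $0$, so $p \nmid v$. Taking $u=v$ (so that trivially $Z(u)=Z(v)$ and $f\equiv 1$, a case the lemma certainly covers) already defeats the proposed factorization. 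In particular, appealing to $Z(u)=Z(v)$ to ``eliminate a Weierstrass remainder'' cannot repair this: the obstruction is not that the remainder fails to vanish for some subtle reason, it is that $p$ is simply the wrong divisor, and no amount of information about the nodal set changes that.

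The division that does work --- and, as far as the present paper's description and its pointer to Lemmas 2.1--2.4 of \cite{LM} indicate, the one used there --- applies Weierstrass preparation to $v$ itself rather than to $p$. After a rotation making $v$ $z_n$-regular of order $k=\deg p$, one writes $v = E\cdot W$ with $E$ a non-vanishing unit and $W$ a Weierstrass polynomial of degree $k$; then the coincidence of the (complexified) zero sets and the proportionality of the leading homogeneous parts (Lemmas 2.1--2.2 of \cite{LM}) are used to show that the same $W$ divides $u$, so $u = E'\cdot W$ and $f = E'/E$ is holomorphic and zero-free near $0$. Your final two paragraphs --- lower-bounding the denominator from $|c_v|>\varepsilon$, Cauchy estimates for the quotient giving \eqref{eq:tay}, and a Taylor-remainder argument giving \eqref{eq:lda} --- are the right sort of bookkeeping once the factorization is correct, but you would still need to track the constants in the Weierstrass preparation for $v$ in terms of $A$, $R$, $\varepsilon$ alone (uniformly over all admissible $v$), which is the real quantitative content of Lemma 2.4 in \cite{LM} and requires more than a single Cauchy inequality.
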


 See Lemma 2.4 and Lemma 2.3 in  \cite{LM}  for the proof.
\subsection*{Structure of the paper}
  We collect auxiliary information essential for the proof of Theorem \ref{main}  in Section \ref{se:2}. First,  we formulate the Nadirashvili compactness principle for harmonic functions with a bounded number of nodal domains.  Then we expose some structure results on harmonic functions sharing the same zero set. The proof of Theorem \ref{main} is given in Section \ref{se:3}. Section \ref{se:4} contains preliminary results on harmonic functions in higher 
dimensions, including classical inequalities, doubling constant 
techniques and the \L ojasiewicz exponents. These results are used in Section \ref{se:5},  where a compactness principle for harmonic functions sharing the zero 
set is established. Combining this principle with the local division 
argument,  we prove Theorem \ref{H} in Section \ref{se:6}. Some comments and open questions are given at the end of Sections 3 and 6.

\section{Toolbox for dimension two} \label{se:2}
\subsection{Compactness principle for harmonic functions with a bounded number of nodal domains}\label{ss:cp}
  The following form of compactness principle holds
 \begin{lemma}[Nadirashvili] \label{N2}
 Let $u_n$ be a sequence of harmonic functions in $\D$ and let $N \in \mathbb{N} $. Suppose that the number of nodal domains of each $u_n$ is less than $N$. Then there exist a subsequence $u_{n_k}$, a sequence $\alpha_{n_k}$  of real numbers and a non-zero function $u$ such that $\alpha_{n_k} u_{n_k}$ converge to $u$ uniformly on compact subsets of $\D$. Clearly, $u$ is harmonic in $\D$. 
 \end{lemma}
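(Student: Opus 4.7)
The strategy is to normalize each $u_n$ and then extract a locally uniformly convergent subsequence by a Montel-type argument, with the bound on the number of nodal domains replacing the usual uniform sup-norm hypothesis. After discarding any $u_n\equiv 0$, I would set $\alpha_n:=1/\sup_{\overline{\D}_{1/2}}|u_n|$; this quantity is finite and positive, since a harmonic function that vanishes on an open set vanishes identically. The rescaled harmonic functions $\tilde u_n:=\alpha_n u_n$ satisfy $\sup_{\overline{\D}_{1/2}}|\tilde u_n|=1$, share zero sets with $u_n$, and therefore still have fewer than $N$ nodal domains in $\D$.

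If one can show that for every compact $K\Subset\D$ the family $\{\tilde u_n\}$ is uniformly bounded on $K$, then interior regularity for harmonic functions (via the mean-value property applied to derivatives, or a Poisson-kernel representation) yields uniform derivative bounds on any slightly smaller compact, and Arzel\`a--Ascoli plus a diagonal extraction produces a subsequence $\tilde u_{n_k}$ converging locally uniformly on $\D$ to some $u$. The limit $u$ is automatically harmonic, and $\sup_{\overline{\D}_{1/2}}|u|=1$ by uniform convergence on $\overline{\D}_{1/2}$, so $u\not\equiv 0$, as required.

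The main obstacle is therefore the uniform boundedness on compacta, which has no analogue in higher dimensions. The underlying two-dimensional mechanism is that a bound on the number of nodal domains forces a bound on the doubling index. Heuristically, writing
\[
\tilde u_n(re^{i\theta})=\sum_{k\ge 0}r^{k}\bigl(a_k^{(n)}\cos k\theta+b_k^{(n)}\sin k\theta\bigr),
\]
if $\sup_{|z|=r_2}|\tilde u_n|/\sup_{|z|=r_1}|\tilde u_n|$ is unbounded in $n$ for some $1/2\le r_1<r_2<1$, then the expansion at scale $r_1$ is dominated by terms of ever-increasing degree $k_n\to\infty$, so on suitable circles $\tilde u_n$ has $\gtrsim k_n$ sign changes, and an Euler-characteristic / topological argument applied to $\tilde u_n^{-1}(0)\cap\D$ produces more than $N$ nodal domains, a contradiction. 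Making this quantitative link between doubling index and nodal-domain count precise is the heart of Nadirashvili's argument and the main difficulty.

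Once the bound is available on one intermediate compact such as $\overline{\D}_{3/4}$, Hadamard's three-circles theorem (or a short iteration in $r$) propagates it to every $\overline{\D}_r$, $r<1$, which combined with the Montel argument above finishes the proof.
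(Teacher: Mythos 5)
Your outline follows essentially the route the paper points to: normalize on a fixed compact, use the nodal-domain bound to control growth at larger radii (the paper's sketch passes through a bound on sign changes on circles, you pass through the doubling index, but Remark~\ref{r:length} identifies these two intermediate quantities as interchangeable via \cite{N98,NPS}), and conclude by a Montel/Arzel\`a--Ascoli argument. The paper does not actually prove this lemma, deferring the crux --- that a bound on nodal domains forces a bound on sign changes/doubling --- to Nadirashvili \cite{N98} and Robertson \cite{R39}; you likewise leave that quantitative topological step at the heuristic level, so your proposal matches the paper both in its strategy and in what it leaves to the cited references.
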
 
The first step in the proof of Lemma \ref{N2} is to show that the bound on the number of nodal domains implies a bound on the number of sign changes on the boundary circle, see \cite[Section 3.4]{N98}, then one may refer to an old result of M.~S.~Robertson \cite{R39} or follow the lines of \cite{N91} and \cite{N98}. In what follows we write $f_n\rightrightarrows f$ for uniform convergence on compact subsets.
 
\begin{lemma}\label{l:nodal} 
Let $\{u_n\}$ and $\{v_n\}$ be sequences of harmonic functions in $\D$ such that $Z(u_n)=Z(v_n), u_n=f_nv_n, f_n>0$ and $u_n\rightrightarrows u$, $v_n\rightrightarrows v$ in $\D$, where $u$ and $v$ are non-zero functions. Then $Z(u)=Z(v)$.
\end{lemma}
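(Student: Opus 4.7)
The plan is to exploit the sign information coming from $f_n > 0$ together with the minimum principle for harmonic functions. Since $u_n = f_n v_n$ with $f_n > 0$, the product satisfies $u_n v_n = f_n v_n^2 \ge 0$ pointwise on $\mathbb{D}$. Because $u_n \rightrightarrows u$ and $v_n \rightrightarrows v$ on compact subsets of $\mathbb{D}$, the product $u_n v_n$ converges locally uniformly to $uv$, and hence
\[
u(x)\,v(x) \ge 0 \quad \text{for every } x \in \mathbb{D}.
\]
This is the one piece of information I would extract from the sequences before letting them go.

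Next I would show $Z(u) \subseteq Z(v)$. Fix $x_0 \in Z(u)$ and suppose toward a contradiction that $v(x_0) \ne 0$. Replacing $v$ by $-v$ (and $u$ by $-u$) if needed, we may assume $v(x_0) > 0$, so there is a neighborhood $U$ of $x_0$ on which $v > 0$. The sign inequality $uv \ge 0$ then forces $u \ge 0$ throughout $U$. But $u$ is harmonic and attains its minimum $u(x_0) = 0$ at an interior point, so the strong minimum principle yields $u \equiv 0$ on $U$. By real analyticity (or unique continuation) $u \equiv 0$ on $\mathbb{D}$, contradicting the hypothesis that $u$ is non-zero. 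Hence $x_0 \in Z(v)$.

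The reverse inclusion $Z(v) \subseteq Z(u)$ follows by the symmetric observation: the relation $u_n = f_n v_n$ with $f_n > 0$ is equivalent to $v_n = g_n u_n$ with $g_n = 1/f_n > 0$, so the same product inequality $uv \ge 0$ (already established) combined with the same minimum-principle argument applied with the roles of $u$ and $v$ swapped yields the desired inclusion. The only subtlety to double-check is that the argument does not require knowing anything about $f_n$ beyond positivity; in particular we do not need $f_n$ to converge or to be bounded, and we never divide by $v_n$ where it might vanish. I do not foresee a serious obstacle, since the nonvanishing of the limits $u,v$ rules out the degenerate case where the minimum principle would give no information.
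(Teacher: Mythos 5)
Your proof is correct and follows essentially the same route as the paper: extract a sign relation from $f_n>0$, pass it to the limit, and invoke the strong minimum principle together with unique continuation. The only cosmetic difference is that you record the sign relation once globally as $uv\ge 0$ (from $u_nv_n=f_nv_n^2\ge 0$), whereas the paper runs a pointwise $\varepsilon$-argument to show $v\ge 0$ near any $z_0$ with $u(z_0)>0$ before applying the same minimum principle.
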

\begin{proof}
Suppose that $u(z_0)>0$ at some point $z_0 \in \D$. Then  $u(z)> \varepsilon$ in some neighborhood of $z_0$ and $u_n(z)>\varepsilon/2$ for all sufficiently large $n$. We assumed that $f_n >0$, hence $v_n \geq 0$ in some neighborhood of $z_0$ for all sufficiently large $n$. It implies that $v \geq 0 $ in some neighborhood of $z_0$. Since $v$ is non-zero harmonic function, we conclude that $v(z_0)>0$. 
 Analogously $u(z_0)<0$ implies $v(z_0)<0$ and $v(z_0)>0$ implies $u(z_0)>0$. Thus $Z(u)=Z(v)$.
\end{proof}
\subsection{The Schwarz reflection principle and a structure result}
 Suppose that $U$ is an analytic function in $\D$ such that $\Im U (w)=0$ if and only if $\Im(w^k)=0$. Let $\epsilon$ be the $k$-th root of unity, then by the Schwarz reflection principle $U( w)= \overline{U(\bar w)}$ and $U(w\epsilon)=U(w)$. The last observation implies that coefficients $a_j$ of Taylor series of $U$ at $0$ are real and $a_j=0$ if $j$ is not divisible by $k$. Then $U=g(w^k)$, where $g$ is an analytic function in $\D$ with real coefficients.

 Now, suppose that $u$ and $v$ are harmonic functions in $\D$ with the same nodal set $Z$. Let further $z_0\in Z$ and $W$ be a neighborhood of $z_0$; suppose that $W$ admits a conformal mapping $\alpha$ into $\D$ such that $\alpha^{-1}(0)=z_0$ and $v\circ\alpha^{-1}(w)=\Im w^k$ for some $k$.  We consider analytic functions $U=\tilde u+iu$ and $V=\tilde v+iv$ on $W$ such that $\Im(U)=u$, $\Im(V)=v$ and $U(z_0)=V(z_0)=0$, then $V\circ \alpha^{-1}(w) = w^k $. Clearly, $u\circ\alpha^{-1}$ has the same zero set in $\D$ as $\Im w^k$, and by the argument above $U\circ \alpha^{-1}(w)=(g\circ V\circ \alpha^{-1})(w)$ for $w\in \D$, where $g$ is an analytic function with real coefficients. Thus $U(z)=g\circ V(z)$ in $W$. Our aim is to extend this statement to a larger class of pairs $U$ and $V$.

\begin{theorem}\label{th:struc}
Let $U$ and $V$  be analytic functions in the unit disc such that $Z(\Im U)=Z(\Im V)$. 
Assume also that   $\Omega=V^{-1}\{r_1<|z|<r_2\}$ is connected  for some $r_1<r_2$ and there exists  integer $k$ such that $V|_\Omega$ is a $k$-cover of $\{r_1<|z|<r_2\}$.  
Then  $U(z)=g\circ V(z)$ for $z\in\Omega$, where $g$ is an analytic function on $\{|z|<r_2\}$ with real coefficients. 
\end{theorem}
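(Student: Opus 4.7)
The strategy is to reduce to the model case $V(w)=w^k$ on a standard annulus via covering-space theory, apply Schwarz reflection to produce the factorization on $\Omega$, and then use Cauchy's theorem to extend $g$ across the inner boundary of $A=\{r_1<|z|<r_2\}$ to the entire disc $\{|z|<r_2\}$.

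Because $V|_\Omega\colon\Omega\to A$ is an unramified $k$-fold holomorphic covering, $V'$ is nonvanishing on $\Omega$ and $V$ is a local biholomorphism. The connected $k$-fold cover of $A$ is unique up to isomorphism and is modeled by $w\mapsto w^k$ from $\tilde A=\{r_1^{1/k}<|w|<r_2^{1/k}\}$ to $A$, so there exists a biholomorphism $\phi\colon\tilde A\to\Omega$ with $V\circ\phi(w)=w^k$. Setting $\tilde U=U\circ\phi$, one has $Z(\Im\tilde U)=Z(\Im w^k)\cap\tilde A$: the union of $2k$ open radial segments at angles $j\pi/k$, $j=0,\ldots,2k-1$. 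On a neighborhood of each such segment $\tilde U$ is holomorphic and real on the segment, so Schwarz reflection gives $\tilde U(w)=\overline{\tilde U(e^{2i\theta}\bar w)}$ near the ray at angle $\theta$ and, by the identity theorem (both sides being holomorphic on $\tilde A$, which is invariant under $w\mapsto e^{2i\theta}\bar w$), on all of $\tilde A$. Composing reflections across the rays at $\theta=0$ and $\theta=\pi/k$ gives the rotation invariance $\tilde U(e^{2\pi i/k}w)=\tilde U(w)$; expanding $\tilde U$ as a Laurent series on $\tilde A$ then forces all nonzero coefficients to lie in indices divisible by $k$, so $\tilde U(w)=g(w^k)$ for some Laurent series $g$ convergent on $A$. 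Applying the first reflection identity on the real axis shows $g$ has real coefficients.

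It remains to show the negative-index coefficients of $g$ vanish, extending $g$ to $\{|z|<r_2\}$. Let $\gamma_r=\phi(\{|w|=r^{1/k}\})\subset\Omega$ for $r_1<r<r_2$; this is a simple closed curve in $\D$ that $V$ wraps $k$ times around $\{|z|=r\}$. Substituting $z=V(w)$ in the Cauchy coefficient formula yields
\[
\frac{1}{2\pi i}\int_{|z|=r}g(z)z^{-m-1}\,dz=\frac{1}{2\pi ik}\int_{\gamma_r}U(w)V(w)^{-m-1}V'(w)\,dw.
\]
For $m\le -1$ the exponent $-m-1\ge 0$, so the integrand on the right is holomorphic on $\D$; since $\gamma_r$ bounds a simply connected region in $\D$ (by the Jordan curve theorem inside the simply connected $\D$), Cauchy's theorem makes the integral vanish. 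Hence $g$ extends holomorphically to $\{|z|<r_2\}$.

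The most delicate step I expect is propagating each Schwarz-reflection identity from a small neighborhood of its radial segment to the full annulus $\tilde A$ via the identity theorem, and then correctly composing these into the rotational invariance that yields the factorization through $w\mapsto w^k$. The reduction to the model cover and the Cauchy argument for the extension are then straightforward.
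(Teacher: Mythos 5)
Your proof is correct, and it takes a genuinely different route from the paper's. You normalize $V$ to the model cover $w\mapsto w^k$ by invoking the uniqueness of the connected $k$-fold cover of the annulus and apply Schwarz reflection directly to $\tilde U=U\circ\phi$ on the $2k$ radial segments where $\Im\tilde U$ vanishes, deducing the $2\pi/k$-rotational invariance and then reading off the factorization $\tilde U=g(w^k)$ from the Laurent series; the paper instead slits the annulus, works with the $k$ branches $V_j^{-1}$ on the slit region, applies Schwarz reflection to each $U\circ V_j^{-1}$, and tracks the monodromy to show all branches produce the same composition. Your covering-space normalization is a cleaner conceptual device that replaces the branch-and-monodromy bookkeeping. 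The larger divergence is in the extension of $g$ across the inner circle: you express the negative-index Laurent coefficients of $g$ as $\frac{1}{2\pi ik}\oint_{\gamma_r}U(w)\,V(w)^{-m-1}V'(w)\,dw$ and observe that for $m\le -1$ the integrand is holomorphic on the simply connected disc $\D$, so the integral vanishes by Cauchy's theorem (indeed the Jordan curve theorem is not even needed here, only that $\gamma_r$ is a closed curve in $\D$). The paper instead takes the Poisson extension $h$ of $\Im g$ from the circle $\gamma$ to $\{|z|<r\}$, argues that $\Im U=h\circ V$ inside $V^{-1}(\gamma)$ by comparing boundary values, and deduces that $\Im g$, hence $g$, extends. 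Your Cauchy-coefficient computation is shorter and notationally tighter than that harmonic-extension step, while the paper's branch argument stays closer to the elementary Schwarz-reflection picture without invoking the classification of covering spaces. Both proofs are sound; yours is the more streamlined.
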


\begin{proof}
Let $S=\{z: r_1<|z|<r_2,-\pi<\arg(z)<\pi\}$, it is a simply connected open set and $V^{-1}(S)=\cup_{j=}^k D_j$ is a disjoint union of $k$  open subsets of $\Omega$. For each of them there is a covering $V|_{D_j}:D_j\rightarrow S$ that is a bijection. Thus we can find the inverse functions $V_1^{-1},..., V_k^{-1}$ that map $S$ onto $D_1,..., D_k$ respectively. 

Let $\gamma$ be a closed circle with radius $r\in(r_1,r_2)$ let $z_0\neq -r$ be a fixed point on $\gamma$. For each $D_j$ there is one point $p_j\in D_j$ such that $V(p_j)=z_0$ and a lift of $\gamma$ that starts at $p_j$ and ends at some $p_{j'}$.  Then $j\mapsto j'$ is a bijection and since $\Omega$ is connected this permutation has no cycles of length less then $k$. We renumerate the preimages of $S$ to make the bijection: $j\mapsto j+1$, $k\mapsto 1$.   

For each $j$ the function $U\circ V_j^{-1}$ is an analytic function on $S$ which takes real values on $(r_1,r_2)$. Therefore $U(V_j^{-1}(\overline{z}))=\overline{U(V_j^{-1}(z))}$ for any $z\in S$. Similarly, looking at the preimages of $\{z: r_1<|z|<r_2, 0<\arg(z)<2\pi\}$, we see that $U(V_j^{-1}(\overline{z}))=\overline{U(V_{j+1}^{-1}(z))}$. Then $U(V_j^{-1}(z))=U(V_{j'}^{-1}(z))$ and $U(V_j^{-1}(z))=U(V_{i}^{-1}(z))$ for any $i,j \in 1..k$. Thus if $V(z_1)=V(z_2)$ for $z_{1,2} \in \Omega$, then $U(z_1)=U(z_2)$.  That gives us $U=g\circ V$ on $V^{-1}(r_1<|z|<r_0)$, where $g$ is an analytic function on $B=\{r_1<|z|<r_2\}$, which takes real values on segments $\pm(r_1,r_2)$.   

Let $h$ be the harmonic continuation of $\Im g|_{\gamma}$ to the disc $\{|z|<r\}$. Then $\Im U = h \circ V$ on $\{|z|=r\}$. Since $\Im U$ and $h$ are harmonic functions in $\{|w|<r\}$ with the same boundary values, $\Im U$ and $h$ are equal on $\{|w|<r\}$. Then $g$ also admits analytic continuation ($-\tilde h + i h$) to $\{|w|<r\}$ such that $U=g\circ V$. Since $g$ takes real values on segments $\pm(r_1,r_2)$ it has real values on $(-r_2,r_2)$ and therefore $g$ has real coefficients.     
\end{proof}

\begin{corollary}\label{pr:struc}
Let $\{U_n\}$ and $\{V_n\}$ be sequences of analytic functions in $\D$ such that $Z(\Im U_n)=Z(\Im V_n)$, and  $V_n\rightrightarrows V=z^k$ in $\D$.  Then for any $\rho<1$ there exists $n_0=n_0(\rho)$ such that for $n>n_0$ we have $U_n(z)=g_n\circ V_n(z)$, when $|z|<\rho$, and $g_n$ is an analytic function with real coefficients. 
\end{corollary}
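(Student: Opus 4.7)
The plan is to apply Theorem \ref{th:struc} to $(U_n, V_n)$ for $n$ large enough, after first producing an annulus $\{r_1 < |w| < r_2\}$ in the target whose preimage under $V_n$ is a connected $k$-fold cover, and then to propagate the resulting identity $U_n = g_n \circ V_n$ outward from that cover to the disk $\{|z|<\rho\}$ by analytic continuation.

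Fix radii $\rho < \rho_1 < \rho_0 < 1$ and choose $0 < r_1 < \rho_1^k < r_2 < \rho_0^k$. From $V_n \rightrightarrows z^k$ (and hence $V_n' \rightrightarrows k z^{k-1}$), Hurwitz's theorem gives, for all large $n$, that $V_n$ has $k$ zeros and $V_n'$ has $k-1$ zeros in $\overline{B(0,\rho_0)}$, all tending to $0$. Consequently the critical values of $V_n$ in $\overline{B(0,\rho_0)}$ lie in $\{|w| < r_1\}$ eventually, while $|V_n| > r_2$ on $\{|z| = \rho_0\}$. Setting $\Omega_n := V_n^{-1}\{r_1 < |w| < r_2\} \cap \{|z| < \rho_0\}$, the first two facts imply that $V_n|_{\Omega_n}$ is a critical-point-free holomorphic map, the third that it is proper onto $\{r_1 < |w| < r_2\}$, so it is a holomorphic covering; a Rouché comparison of $V_n - w_0$ with $z^k - w_0$ on $\{|z|=\rho_0\}$ identifies the total degree as $k$.

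The main difficulty is showing that $\Omega_n$ is \emph{connected}, as needed to invoke Theorem \ref{th:struc}. I would argue it as follows. Put $\sigma := (r_1 r_2)^{1/(2k)}$, so $\sigma^k = \sqrt{r_1 r_2} \in (r_1, r_2)$; for large $n$ the circle $C := \{|z| = \sigma\}$ lies in $\Omega_n$. Let $\Omega_n^\ast$ be its connected component in $\Omega_n$. Then $V_n \colon \Omega_n^\ast \to \{r_1 < |w| < r_2\}$ is a connected $d$-fold holomorphic covering, which by the classification of finite covers of an annulus is biholomorphic to $\{r_1^{1/d} < |\zeta| < r_2^{1/d}\}$ with $V_n$ conjugated to $\zeta \mapsto \zeta^d$. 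Pulled back through this biholomorphism, $C$ becomes a simple closed curve in an annulus, hence is either contractible or wraps around once, so $V_n(C)$ winds either $0$ or $d$ times around the origin. But $V_n(C)$ is a small perturbation of $\{\sigma^k e^{ik\theta} : \theta \in [0, 2\pi)\}$, whose winding number is $k \geq 1$; this forces $d = k$, and combined with the total degree being $k$ this forces $\Omega_n^\ast = \Omega_n$.

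With connectedness established, Theorem \ref{th:struc} (or rather its obvious adaptation where the unit disc is replaced by the disc $\{|z| < \rho_0\}$) yields analytic $g_n$ on $\{|w| < r_2\}$ with real Taylor coefficients such that $U_n = g_n \circ V_n$ on $\Omega_n$. Since $|V_n(z)| \leq \rho_1^k + o(1) < r_2$ for $|z| \leq \rho_1$ and $n$ large, the composition $g_n \circ V_n$ is holomorphic on $\{|z| < \rho_1\}$; it agrees with $U_n$ on the nonempty open set $\Omega_n \cap \{|z| < \rho_1\}$, so by the identity theorem the equality extends to all of $\{|z| < \rho_1\}$, and in particular to $\{|z| < \rho\}$.
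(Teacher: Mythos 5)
Your proof is correct, and it takes the intended route: the paper states this as an immediate corollary of Theorem \ref{th:struc} with no written proof, and what you have done is supply the verification that the hypotheses of that theorem hold for $V_n$ once $n$ is large. The substantive content you add is the connectedness of $\Omega_n = V_n^{-1}\{r_1<|w|<r_2\}\cap B(0,\rho_0)$, which you handle cleanly via the classification of finite holomorphic covers of an annulus combined with a winding-number comparison of $V_n|_C$ with $z^k|_C$; the remaining steps (Hurwitz/Rouch\'e to isolate zeros and critical points near the origin and to pin the total degree at $k$, properness to get a covering, and the identity theorem to propagate $U_n=g_n\circ V_n$ from $\Omega_n$ to all of $\{|z|<\rho_1\}$) are the natural bookkeeping and are all sound.
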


\section{Proof of Theorem \ref{main}}\label{s:main} \label{se:3}

\subsection{Harnack's inequality for the ratios}
Before we proceed to the proof of Theorem \ref{main}, we make some simple observations.
Suppose there exist two sequences $u_n$ and $v_n$  of harmonic functions in $\D$ such that $Z(u_n)=Z(v_n)$, the number of nodal domains of each $u_n$ and $v_n$ is not greater than $N$, and the ratios $f_n=u_n/v_n$ enjoy either 
\begin{equation} \label{infty}
{\rm{(a)}}\ \frac{\sup\limits_K|f_n| }{\inf\limits_K|f_n|} \mathop{\to}\limits_{n\to \infty} +\infty\quad\text{or}\quad
{\rm{(b)}}\ 
\frac{\sup\limits_K|\nabla f_n| }{\inf\limits_K|f_n|} \mathop{\to}\limits_{n\to \infty} +\infty.
\end{equation}

   By Lemma \ref{N2} we may assume $\alpha_n u_n$ normally converge to a non-zero harmonic in $\D$ function $u$,  and $\beta_n v_n$ normally converge to a non-zero harmonic in $\D$ function $v$, where $\alpha_n$ and $\beta_n$ are sequences of non-zero real numbers.  Multiplying $u_n$ and $v_n$ by constants, we do not change the properties (\ref{infty}a) and (\ref{infty}b), so we may assume that all $\alpha_n =1$ and all $\beta_n=1$. Then $u_n$ normally converge to $u$ and $v_n$ normally converge to $v$. The ratio $f_n= u_n /v_n$ does not vanish in $\D$ and we may also assume that all $f_n$ are positive in $\D$.

 Now we start the proof of (\ref{eq:H}) in Theorem \ref{main}.
 Assume the contrary. The observations from above reduce the question to the following statement.

\begin{prop}\label{pr:H}
 Let $\{u_n\}$ and $\{v_n\}$ be sequences of harmonic functions in $\D$ such that $Z(u_n)=Z(v_n), u_n=f_nv_n, f_n>0$ and $u_n\rightrightarrows u$, $v_n\rightrightarrows v$ in $\D$, where $u$ and $v$ are non-zero functions. Then \[\sup_n(\sup_K f_n/\inf_K f_n)<+\infty.\]  
\end{prop}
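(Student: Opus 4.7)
The plan is to establish the stronger statement that $f_n\rightrightarrows f:=u/v$ uniformly on every compact $K\subset\D$. Indeed, by Lemma \ref{l:nodal} we have $Z(u)=Z(v)$, so by the result of \cite{LM} cited in the introduction, $f$ extends to a real analytic nowhere-vanishing function on $\D$; the hypothesis $f_n>0$ forces the extension to be strictly positive, after which continuity delivers $\sup_K f_n/\inf_K f_n\to\sup_K f/\inf_K f<\infty$, which is the desired bound.

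Away from $Z$ the uniform convergence is immediate: on any compact $K'\subset\D\setminus Z$ one has $|v|\ge c>0$, hence $|v_n|\ge c/2$ for $n$ large, so $f_n=u_n/v_n\rightrightarrows u/v$ uniformly on $K'$.

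The crux is a point $z_0\in Z$. I would let $V$ be a local analytic primitive of $v$ with $\Im V=v$, normalized so $V(z_0)=0$, and let $k$ be the order to which $V$ vanishes at $z_0$. A conformal change $w=\phi(z)$ on a small disc followed by a rescaling brings $V$ to the normal form $w^k$ on $\D$. Setting $\Im V_n=v_n$ analogously, $v_n\rightrightarrows v$ then gives $V_n\rightrightarrows w^k$ on $\D$. Applying Corollary \ref{pr:struc} would yield, for $n$ large and $|w|<\rho'$, analytic functions $g_n$ with real coefficients such that $U_n=g_n\circ V_n$ (where $\Im U_n=u_n$). The uniform convergences $U_n\rightrightarrows U$ and $V_n^{-1}$ to a branch of the $k$th root (on an annulus where $V_n$ is a $k$-cover) force $g_n\rightrightarrows g$ on compacts, where $U=g\circ V$. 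Since $g_n$ has real coefficients, $\overline{g_n(V_n)}=g_n(\overline{V_n})$, and one can rewrite the ratio as a divided difference
\[
f_n=\frac{\Im U_n}{\Im V_n}=\frac{g_n(V_n)-g_n(\overline{V_n})}{V_n-\overline{V_n}}\longrightarrow \frac{g(V)-g(\overline V)}{V-\overline V}=f.
\]
The right-hand side is jointly analytic in $(V,\overline V)$, hence continuous across $\Im V=0$, so this convergence is uniform on a full neighborhood of $z_0$.

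Covering the compact $K\cap Z$ by finitely many such neighborhoods $W_1,\dots,W_m$ and noting that $K\setminus\bigcup_j W_j$ is a compact subset of $\D\setminus Z$, the two steps above glue to $f_n\rightrightarrows f$ uniformly on $K$, from which the proposition follows. The main obstacle I foresee is the uniform local setup of Corollary \ref{pr:struc} at singular points of $Z$ (where $k\ge 2$): although $V$ has a zero of order $k$ at $z_0$, Hurwitz's theorem only forces the $k$ zeros of $V_n$ (with multiplicity) to accumulate at $z_0$, possibly splitting into several nearby zeros of lower order. Fixing the conformal normalization based on $V$ rather than on $V_n$, verifying the rescaled convergence $V_n\rightrightarrows w^k$ on all of $\D$, and extracting $g_n\rightrightarrows g$ uniformly in $n$ will form the technical heart of the argument.
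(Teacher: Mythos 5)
Your proposal is correct but takes a genuinely different route from the paper's. The paper proves Proposition \ref{pr:H} without any structure theorem: it fixes a circle $\partial\D_r\supset K$ avoiding the singular set $Z_s$, applies the local division lemma (Lemma \ref{lda}) at the finitely many zeros of $Z$ on that circle --- where $\nabla v\neq 0$, so only the simplest case $k=1$ arises --- uses the classical Harnack inequality at the remaining points of the circle, and finally invokes the maximum/minimum principle for ratios of harmonic functions (from \cite{LM}) to propagate the bound inward from $\partial\D_r$ to $K$. You instead prove the stronger local uniform convergence $f_n\rightrightarrows f$ by pulling out the conformal normalization, Corollary \ref{pr:struc}, and the divided-difference representation $\bigl(g_n(V_n)-g_n(\overline{V_n})\bigr)/\bigl(V_n-\overline{V_n}\bigr)$ --- exactly the machinery the paper deploys later for the gradient-estimate half of Theorem \ref{main}, not for the Harnack part. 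Your route is sound: uniform boundedness of $U_n$ together with a Montel/uniqueness-of-limit argument forces $g_n\rightrightarrows g$, and the divided difference is jointly analytic, so it extends across $Z$ and converges uniformly; the Hurwitz zero-splitting worry you raise at the end is already absorbed into Corollary \ref{pr:struc}, whose hypothesis is only $V_n\rightrightarrows z^k$ (not a prescribed zero structure for each $V_n$). The trade-off: the paper's proof of the proposition is lighter, needing the local division lemma only at simple zeros and avoiding the structure theorem at this stage, whereas yours is more unified --- the single convergence $f_n\rightrightarrows f$ hands you both the Harnack inequality and, after differentiating the series as in \eqref{eq:ratio}, the gradient estimate as well, and it establishes a strictly stronger conclusion than the proposition requires.
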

 
\begin{proof}
    Let $Z$ denote the nodal set of $u$ and $v$, see Lemma \ref{l:nodal}.  By $Z_s$ we denote the singular set of $Z$, namely  $Z_s=\{ z \in \D: u(z)= |\nabla u(z)|=0\}=\{ z \in \D: v(z)= |\nabla v(z)|=0\}$. (It is uniquely determined by $Z$, see Lemma 
\ref{lda}). The critical set is a countable set with no accumulation points within $\D$.
		
 We consider an open disc of radius $r\in(0,1)$ such that $K\subset \D_r \subset \D$ and $\partial \D_r \cap Z_s = \emptyset$. Note that $Z\cap \partial \D_r$ is the union of a finite number of points $z_i$, $1\le i\le k $. Each $z_i$ does not belong to the critical set, hence $|\nabla v(z_i)|> \varepsilon$  for some $\varepsilon>0$ and for any $i, 1\le i\le k$. We fix $i$ and consider a neighborhood $V_i \subset \D$ of $z_i$ such that $|\nabla v|(z)> \varepsilon/2 $ for $z \in  V_i$. Recall that $v_n$ normally converge to $v$ as $n \to +\infty$, it implies that  there exists a neighborhood $W_i\subset\overline{W_i}\subset V_i$ of $z_i$ such that $|\nabla v_n|(z)>\varepsilon/4$ for all $z \in W_i $ and for all $n$ large enough.
Further,  there exists  $M>0$ such that $ \sup\limits_{W_i}|v_n|< M$ and $\sup\limits_{ W_i}|u_n|< M$ for all $n \in \mathbb{N}$. 

The next step is to show that there exist a constant $C_i$ and a radius $r_i>0$ such that $\sup\limits_{B_i}|f_n|< C_i \inf\limits_{B_i}|f_n|$ for all $n \in \mathbb{N}$, where $B_i=B_{r_i}(z_i)$. If  $r_i$ is small enough, then $2\overline{B_i} \subset W_i$. By the standard Cauchy estimates there exist $A$, $R>0$ such that the following estimates of partial derivatives of $u_n$ and $v_n$ hold: 
 \[\sup\limits_{B_{r_i}(z_i)} |D^\alpha u_n| \leq AR^{|\alpha|} \alpha! \quad \& \quad \sup\limits_{B_{r_i}(z_i)} |D^\alpha v_n| \leq AR^{|\alpha|} \alpha!.
 \]
   The symbol $\alpha$ denotes the multi-index, $A$ and $R$ do not depend on $n$.
	
We know that $u(z_i)=v(z_i)=0$, it implies that for any $d>0$ there are $\xi_+$ and $\xi_-$: $v(\xi_-)<0$ and $v(\xi_+)>0$,
 $|z_i-\xi_+|<d$, $|z_i-\xi_-|<d$, hence for any $n$ large enough $v_n(\xi_-)<0$ and $v_n(\xi_+)>0$, so there is a zero of $v_n$ in a segment $[\xi_-,\xi_+]$. The last argument implies that there exists a sequence of points $\xi_n$ such that  $\xi_n \to z_i$ and $v_n(\xi_n)=0$.
Now we apply Lemma \ref{lda} to $D_i=B_{3r_i/2}$. Then \eqref{eq:tay} implies
\[
\sup\limits_{B_{r_i}(z_i)}|f_n| \leq c_i\inf\limits_{B_{r_i}(z_i)}|f_n|.
\]
   
 Let $z \in (\partial \D_r\setminus Z)$, then there is $r_z>0$ such that $u$ and $v$ do not vanish in $B_{2r_z}(z)$. Then for $n$ large enough $u_n$ and $v_n$ do not vanish in $B_{3r_z/2}(z)$ and by the classical  Harnack inequality there is $c_z>0$ such that  
\[\sup\limits_{B_{r_z}(z)}|f_n| \leq c_z\inf\limits_{B_{r_z}(z)}|f_n|,\quad  {\text{for all} }\quad n>n_0(z).\] Note that $\{B_r(z) \}_{z\in \partial \D_r} $ form an open covering of the compact set $\partial \D_r$, using the standard compactness argument, it is easy to see that 
\[ 
\sup\limits_{ \partial \D_r}|f_n| \leq C \inf\limits_{\partial \D_r}|f_n|.
\]
The proposition follows from the maximum and minimum principles for the ratios of harmonic functions, see \cite{LM}.
\end{proof}

\subsection{Gradient estimate for the ratios} 
We assume the contrary, as above,  and take sequences of functions $u_n$ and $v_n$ such that (\ref{infty}b) holds and $u_n, v_n$ converge normally to $u$ and $v$ respectively, once again $Z(u)=Z(v)=Z$.



Now, if (\ref{infty}b) holds then, by Proposition \ref{pr:H}, we have
\[\sup_n\sup_{x\in K}\frac{|\nabla f_n(x)|}{|f_n(x)|}=+\infty.\]
We may assume that 
$|\nabla f_n(x_n)|/f_n(x_n)\rightarrow\infty$ and $x_n$ converge to $x_0$.

For each $a\in K\cap Z$, let $B(a)$ be a disc  with center at  $a$ that admits a conformal mapping $\beta_a:B(a)\to\D$  such that $\beta(a)=0$ and $v\circ \beta_a^{-1}(w)=\Im w^k$ for some $k=k(a)\ge 0$, $\beta(B(a))=\D$. Further, let $D(a)=\beta_a^{-1}(\frac14\D)$. Then $K\cap Z$ can be covered by finitely many  of the sets $\{D(a)\}_{a\in K\cap Z}$, let $K\cap Z\subset \cup_{j=1}^J D(a_j)=O$ and $\delta_1=\dist(K\setminus O,Z)$. 
 
Then either  $x_0\in D(a_j)$ for some $j$ or $B_{\delta_1}(x_0)$ does not intersect $Z$.  In the latter case $u_n$ and $v_n$ do not change sign in $B_{\delta_1/2}(x_0)$ for $n$ large enough and the usual Harnack inequality for positive harmonic functions leads to a contradiction.  Otherwise we write $\beta=\beta_{a_j}$ and  define $\tilde{g}=g\circ\beta^{-1}$, where $g\in\{u_n,v_n,u,v,f_n\}$.
 Clearly $|\nabla \beta^{-1}|$ is bounded in $\frac14\D$. We have reduced the gradient estimate for the ratios  to the following

\begin{prop} Let $\{\tilde{u}_n\}$ and $\{\tilde{v}_n\}$ be sequences of harmonic functions in $\D$ such that $Z(\tilde{u}_n)=Z(\tilde{v}_n), \tu_n=\tf_n\tv_n, \tf_n>0$ and $\tu_n\rightrightarrows \tu$, $\tv_n\rightrightarrows \tv$ in $\D$, where $\tv=\Im z^k$. Then 
\[
 \sup_n\sup_{\frac14\D}\frac{|\nabla \tf_n|}{\tf_n}<+\infty.\]
\end{prop}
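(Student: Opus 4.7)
The plan is to apply Corollary~\ref{pr:struc} to write $\tU_n = g_n \circ \tV_n$, then observe that the ratio $\tf_n$ admits an explicit divided-difference formula in terms of $g_n$ and $\tV_n$, and bound its gradient via Cauchy estimates on $g_n$.

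First, pass to analytic completions. Choose analytic functions $\tU_n, \tV_n$ on $\D$ with $\Im \tU_n = \tu_n$, $\Im \tV_n = \tv_n$, normalizing their real parts at $0$ so that $\tV_n \rightrightarrows z^k$ and $\tU_n$ converges normally to some analytic $\tU$. Fix a radius $\rho \in (1/2,1)$ close enough to $1$; by Corollary~\ref{pr:struc}, for all sufficiently large $n$ there is a factorization $\tU_n(z) = g_n(\tV_n(z))$ on $B_\rho$, where $g_n(w) = \sum_{j \ge 0} a_j^{(n)} w^j$ has real Taylor coefficients.

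Since the $a_j^{(n)}$ are real, $\overline{g_n(w)} = g_n(\bar w)$. Taking imaginary parts in $\tU_n = g_n(\tV_n)$ and dividing by $\tv_n = \Im \tV_n$ yields the key identity
\[
\tf_n(z) \;=\; \frac{g_n(\tV_n(z)) - g_n(\overline{\tV_n(z)})}{\tV_n(z) - \overline{\tV_n(z)}} \;=\; \sum_{j\ge 1} a_j^{(n)} \sum_{\ell=0}^{j-1} \tV_n(z)^{j-1-\ell}\,\overline{\tV_n(z)}^{\,\ell},
\]
the divided difference of $g_n$ evaluated at $(\tV_n(z), \overline{\tV_n(z)})$. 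This is a manifestly real-analytic expression, valid throughout $B_\rho$ including on the nodal set.

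Now uniform bounds. Normal convergence of $\tU_n$ on $B_\rho$ gives $\sup_{B_\rho}|\tU_n| \le M$ for large $n$. Because $\tV_n \to z^k$ uniformly on $B_\rho$, the image $\tV_n(B_\rho)$ contains a fixed disc $\{|w| \le R\}$ for some $R>0$ and all large $n$; hence $|g_n| \le M$ on $\{|w| \le R\}$ and $|a_j^{(n)}| \le M R^{-j}$. Choose $r < R$ with $|\tV_n(z)| \le r$ for all $z \in B_{1/4}$ and large $n$ (possible since $|z^k| \le (1/4)^k$ there). Summing the series for $\tf_n$ and differentiating term by term, and using that $|\nabla \tV_n|$ is uniformly bounded on $B_{1/4}$ (by normal convergence of $\tV_n'$), yields uniform bounds $|\tf_n| \le C$ and $|\nabla \tf_n| \le C'$ on $B_{1/4}$ via geometric series in $r/R$.

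To conclude, Proposition~\ref{pr:H} applied to $\overline{B_{1/4}}$ gives $\sup \tf_n \le C_0 \inf \tf_n$ uniformly in $n$. Picking any $z^* \in B_{1/4}$ with $\tv(z^*) \neq 0$, we have $\tf_n(z^*) \to \tu(z^*)/\tv(z^*) > 0$, forcing $\inf_{B_{1/4}} \tf_n \ge c > 0$ uniformly. Combined with the gradient bound, $\sup_{B_{1/4}} |\nabla \tf_n|/\tf_n \le C'/c$, contradicting \eqref{infty}b. The main obstacle is organizing the radii $\rho, R, r$ so that the structure theorem and Cauchy estimates apply simultaneously on a set containing $B_{1/4}$; once the divided-difference identity is in hand, the estimates reduce to a routine geometric-series computation in $r/R$.
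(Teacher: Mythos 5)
Your proof is correct and follows essentially the same route as the paper's: apply Corollary~\ref{pr:struc} to write $\tU_n=g_n\circ\tV_n$, note that $\tf_n$ then has an explicit everywhere-real-analytic expression in terms of $g_n$ and $\tV_n$, control the Taylor coefficients of $g_n$ via normal convergence of $\tU_n$ and Cauchy estimates, and sum a geometric series. The only difference is notational: you write $\tf_n$ as the divided difference $\bigl(g_n(\tV_n)-g_n(\overline{\tV_n})\bigr)/\bigl(\tV_n-\overline{\tV_n}\bigr)=\sum_j a_j^{(n)}\sum_{\ell=0}^{j-1}\tV_n^{j-1-\ell}\overline{\tV_n}^{\,\ell}$, whereas the paper expands the same quantity as $\sum_j a_j\sum_k(-1)^k\binom{j}{2k+1}\tw_n^{j-2k-1}\tv_n^{2k}$; these are the same identity ($\Im V^j/\Im V$) written two ways, and both lead to the same geometric-series estimate. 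The final step (using Proposition~\ref{pr:H} together with $\tf_n(z^*)\to\tu(z^*)/\tv(z^*)>0$ to get a uniform positive lower bound on $\tf_n$) is also identical.
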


\begin{proof} Let $\tV_n$ be analytic in $\D$ with $\Im \tV_n=\tv_n$ and such that $\tV_n\rightrightarrows z^k$, $\tV_n=\tw_n+i\tv_n$. By Corollary \ref{pr:struc}  for each $r_0<1$ we have $\tu_n=\Im (g_n\circ\tV_n)$ in $r_0\D$ for all $n=n(r_0)$ large enough, where $g_n=\sum_1^\infty a_{n,j}z^j$ is an analytic function in $r_1\D$ with real coefficients $a_{n,j}$, $r_1<r_0^{1/k}$. We get
\begin{multline}\label{eq:ratio}
\tf_n=\frac{\tu_n}{\tv_n}=\sum_{j=1}^\infty a_j\frac{\Im(\tw_n+i\tv_n)^j}{\tv_n}=\\
\sum_{j=1}^\infty a_j\sum_{k=0}^{[(j-1)/2]}(-1)^k{j\choose 2k+1}\tw_n^{j-2k-1}\tv_n^{2k}.
\end{multline}
By Proposition \ref{pr:H}, $\tf_n$ are bounded from above and below in $\frac14\D$ uniformly in $n$ (since $\tf_n(x_0)\rightarrow\tu(x_0)/\tv(x_0)$ when $x_0\not\in Z(v)$). Then it is enough to show that $|\nabla f_n|$ are bounded from above in $\frac14\D$. 

We have  $|\tv_n|,|\tw_n|\le M_0$ and $|\nabla \tv_n|=|\nabla \tw_n|\le M_1$ in $\frac 14 \D$, for some constants $M_0$ and $M_1$ and $n\ge n_0$,  we may also assume that $M_0<1/3$ by taking $n_0$ large enough. Further, since $\tilde{u}_n\rightrightarrows \tilde{u}$  in $\D$, we get that $g_n$ are uniformly bounded in $r_1\D$, $|g_n|\le A$ in $r_1\D$. Then, the Cauchy estimate implies $|a_j|\le Ar_1^{-j}$. Finally,
\[
|\nabla \tf_n|\le AM_1\sum_{j=1}^\infty j2^jM_0^{j-2}r_1^{-j}<+\infty,\]
when $r_0$ and $r_1$ are chosen close to $1$.
\end{proof}
\subsection{Concluding remarks} 
Theorem \ref{main} implies the following corollary, which generalizes the standard Cauchy estimate.
 \begin{corollary} \label{cor:m}
 Let $u$ and $v$ be harmonic functions in the disc $r\D \subset \mathbb{R}^2$ of radius $r$ such that $Z(u)=Z(v)$ and let  $f$ be the ratio of $u$ and $v$.  Suppose the number of nodal domains of $u$ (and $v$) is less than a fixed number $N$. Then there exists $C$, depending  on $N$ only such that 
$|\nabla \log|f||(0)  \leq Cr^{-1}$ . 
\end{corollary}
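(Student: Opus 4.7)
The plan is a direct rescaling argument that reduces the corollary to Theorem \ref{main} applied on the unit disc with the compact set $K=\{0\}$.

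First, define $\tU(z)=u(rz)$ and $\tV(z)=v(rz)$ for $z\in\D$. These are harmonic functions in the unit disc, and their common zero set $\tilde Z=r^{-1}Z(u)$ is a rescaling of the original zero set; in particular, the number of nodal domains of $\tU$ (resp.\ $\tV$) in $\D$ equals the number of nodal domains of $u$ (resp.\ $v$) in $r\D$, so it is still bounded by $N$. Let $\tf=\tU/\tV$, which is the rescaled ratio $\tf(z)=f(rz)$; in particular $\nabla\tf(0)=r\,\nabla f(0)$ and $\tf(0)=f(0)$.

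Next, apply Theorem \ref{main} to the pair $\tU,\tV$ with the compact set $K=\{0\}\subset \D$. The gradient estimate \eqref{eq:GE} gives
\[
|\nabla\tf(0)|=\sup_K|\nabla\tf|\le C_2(K,N)\inf_K|\tf|=C_2(N)\,|\tf(0)|,
\]
with $C_2$ depending only on $N$ (since $K=\{0\}$ is a universal choice). Substituting back the scaling relations yields $r\,|\nabla f(0)|\le C_2(N)\,|f(0)|$, i.e.
\[
\frac{|\nabla f(0)|}{|f(0)|}\le \frac{C_2(N)}{r}.
\]

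Finally, recall from the introduction that the ratio $f$ extends to a real analytic function of constant sign on the ambient ball, so $|f(0)|\neq 0$ and $\nabla\log|f|(0)=\nabla f(0)/f(0)$; taking absolute values gives exactly $|\nabla\log|f||(0)\le Cr^{-1}$ with $C=C(N)$. There is no real obstacle here: once Theorem \ref{main} is established, the corollary is a one-line scaling consequence; the only minor point to check is that the hypothesis of Theorem \ref{main} (bounded number of nodal domains in $\D$) is invariant under the linear dilation $z\mapsto rz$, which is immediate.
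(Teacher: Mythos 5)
Your proof is correct, and it matches the argument the paper implicitly intends: the paper presents Corollary \ref{cor:m} as a direct consequence of Theorem \ref{main} with no written proof, and the standard way to obtain the $r^{-1}$ factor is exactly the dilation $z\mapsto rz$ you use, combined with the gradient estimate \eqref{eq:GE} applied to $K=\{0\}$. Your verification that the number of nodal domains and the constancy of sign of $f$ are preserved under rescaling, and the identity $|\nabla\log|f||=|\nabla f|/|f|$, complete the argument cleanly.
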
 

We have obtained estimates for the ratios of harmonic functions and their gradients. Following the same pattern and using the expression for the ratio as in \eqref{eq:ratio}, we can also show that 
$\max_K|D^\alpha f|\le C\alpha! R^{|\alpha|}$
where $f=u/v$ and $C=C(K;N)$. See \cite{LM} for a similar argument.

\begin{remark} \label{r:length} Suppose that the length of $Z$ is bounded by $L$.  Then the uniform estimates \eqref{eq:H}, \eqref{eq:GE} will remain true with the constants $C_{1,2}$ depending on $L$ and $K$ only. It can be explained by the  fact that the number  of nodal domains in the unit disc can be estimated from above  by the length of the nodal set in a bigger disc and vice versa. We provide some references here. First, the number of nodal domains can be estimated by the doubling constant (the definition is given in Section \ref{se:4} below) in a larger disc, and vice versa, see \cite{N98}.
The estimate of the length of the nodal set from above by the 
doubling constant is well known, see for example \cite{H07}. The reverse 
estimate follows from the connection between the number of sign changes on a circle
and the doubling constant, which can be found in \cite{NPS}, see also 
references therein.

\end{remark}
Theorem \ref{th:struc}  on analytic functions taking real values on the same curves  can be considered in a more general and complicated framework of structure theory by K.~Stephenson, \cite{S86}. We gave an elementary proof for the case needed in this note.
  
All our arguments, except for the proof of Lemma \ref{l:nodal}, were essentially two-dimensional. We don't know if, for example, Proposition 
\ref{pr:H} holds in higher dimensions. 


\section{Toolbox for higher dimensions} \label{se:4}
\subsection{Classical Harnack inequality and elliptic estimates}
 The following facts about harmonic functions in the unit ball of $\R^n$ are well-known and follow immediately from the Poisson formula. Let $0<r<1$, there exist constants $h_r, a_r, b_r$ that depend on $r$ and $n$ only such that 
\begin{itemize}
\item {\it Harnack's inequality:} for any positive harmonic function $u$ in the unit ball 
\[\inf_{B_r}u\ge h_r\sup_{B_r}u.\]
\item {\it Cauchy estimates:}  for each multiindex  $\alpha$ and any harmonic function $u$ one has
\[
\sup_{B_r}|D^\alpha u|\le \alpha! a_r^{|\alpha|}\sup_{B_1}|u|.\] 
\item{\it Equivalence of norms:} for any harmonic function $u$ in the unit ball
\[\sup_{B_r}|u|\le b_r\left(\dashint_{\partial B_1}|u|^2\right)^{1/2}.\] 
\end{itemize} 
\subsection{Doubling constants} Let $u$ be a non-zero harmonic function in some domain $\Omega$. For each $x\in\Omega$ and $\rho<\dist(x,\partial\Omega)$ let 
\[
H_u(x,\rho)=\dashint_{\partial B_{\rho}(x)}|u|^2, \quad N_u(x,\rho)=H_u(x,2\rho)H_u(x,\rho)^{-1}.\]
 By $\dashint_{\partial B_{\rho}(x)}$ we denote the integral with respect to the normalized surface measure on $\partial B_{\rho}$ such that $\dashint_{\partial B_{\rho}(x) } 1 =1 $.

By considering the expansion of $u$ in homogeneous harmonic polynomials, it is not difficult to see that $\log H_u(x,\rho)$ is a convex function of $\log \rho$ and therefore $N_u(x,\rho)$ is a non-decreasing function of $\rho$. See \cite{H07} for the details.
 Further, $\lim_{\rho\to 0} N_u(x,\rho)=2^{2k},$ where $k$ is the order of vanishing 
of $u$ at $x$.

Given a harmonic function $u$ with $Z(u)\neq \emptyset$, we define \[\delta_u(x)=\dist(x,Z(u)),\] clearly $\delta_u$ depends on $Z=Z(u)$ only. We will skip sub-index $u$ in $\delta_u$, when it does not lead to any ambiguity.
\begin{lemma} \label{AL4}
 There exists a  constant $K>1$ depending on the dimension $n$ only such that for any function $u$ harmonic in $B_1$ with $Z(u)\cap B_{1/2}\neq\emptyset$ and any $x \in B_{1/4} $ with $u(x) \neq 0$ and $\delta_u(x)< (4K)^{-1}$ there exists a point $\tilde x $ for which  $|\tilde x- x|\leq K \delta_u(x)$ and $|u(\tilde x)| \geq 2|u(x)|$.
\end{lemma}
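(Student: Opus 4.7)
\textbf{Proof plan for Lemma \ref{AL4}.}
Let $x_0\in Z(u)$ realize the distance $\delta=\delta_u(x)=|x-x_0|$. Since $x\in B_{1/4}$ and $\delta<(4K)^{-1}<1/4$, the point $x_0$ lies in $B_{1/2}$, so $B_\rho(x_0)\subset B_1$ for every $\rho<1/2$. Introduce the quantities
\[
H(\rho)=\dashint_{\partial B_\rho(x_0)}u^2,\qquad M(\rho)=\sup_{B_\rho(x_0)}|u|.
\]
The strategy is to prove a quantitative doubling inequality for $M$ around the zero $x_0$, which is strong enough to guarantee that $M$ at least doubles between two radii whose ratio depends only on $n$.

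The key input is the expansion of $u$ at $x_0$ into homogeneous harmonic polynomials: because $u(x_0)=0$, the expansion starts in degree $\ge 1$, so by orthogonality on the sphere
\[
H(\rho)=\sum_{k\ge 1}\|u_k\|_{L^2(S^{n-1})}^2\,\rho^{2k},
\]
and hence $H(2\rho)\ge 4H(\rho)$ whenever $B_{2\rho}(x_0)\subset B_1$. Iterating gives $H(2^k\rho)\ge 4^{k-1}H(2\rho)$ for admissible radii. On the other hand, the equivalence-of-norms property from Section \ref{se:4} applied on the ball $B_{2\rho}(x_0)$ yields a dimensional constant $C_n$ with
\[
M(\rho)\le C_n\sqrt{H(2\rho)},\qquad \sqrt{H(\rho)}\le M(\rho).
\]

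Combining these, for any integer $k\ge 1$ with $2^{k+1}\rho\le 1-|x_0|$,
\[
\frac{M(2^{k+1}\rho)}{M(\rho)}\;\ge\;\frac{\sqrt{H(2^{k+1}\rho)}}{C_n\sqrt{H(2\rho)}}\;\ge\;\frac{2^{k}}{C_n}.
\]
Choose $k=k(n)$ so that $2^{k}\ge 2C_n$ and set $R=2^{k+1}$. Then $M(R\delta)\ge 2M(\delta)\ge 2|u(x)|$ (the last inequality because $x\in\overline{B_\delta(x_0)}$). Picking $\tilde x\in\overline{B_{R\delta}(x_0)}$ at which the supremum $M(R\delta)$ is attained gives
\[
|\tilde x-x|\le |\tilde x-x_0|+|x_0-x|\le (R+1)\delta,
\]
so $K=R+1$ is the constant we want; it depends only on $n$. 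The hypothesis $\delta<(4K)^{-1}$ ensures $R\delta<1/2<1-|x_0|$, legitimizing every application of the doubling and of the norm equivalence inside $B_1$.

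The only non-routine step is the dimensional doubling inequality $H(2\rho)\ge 4H(\rho)$, which is really the content; once that is in hand, everything else is bookkeeping with the equivalence of $L^2$ and $L^\infty$ norms from Section \ref{se:4} and a careful accounting of the ambient radius to keep $B_{R\delta}(x_0)\subset B_1$. There is no need for information on the degree of vanishing of $u$ at $x_0$: the lower bound $k\ge 1$ already suffices because we only ask for a factor $2$, and the dimensional cost of passing from $H$ to $M$ is absorbed by iterating the doubling finitely many times.
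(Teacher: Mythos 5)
Your proof is correct and takes essentially the same route as the paper: pick a nearest zero $x_0$ of $u$, exploit the fact that the spherical $L^2$ mean $H(\rho)=\dashint_{\partial B_\rho(x_0)}u^2$ at least quadruples on doubling the radius because the vanishing order at $x_0$ is $\geq 1$, iterate a dimension-dependent number of times, and pass between $L^2$ and $L^\infty$ norms using the elliptic estimate, finally adjusting the constant $K$ by one to account for $|x-x_0|=\delta$. The one cosmetic difference is that the paper obtains $H(2\rho)\geq 4H(\rho)$ by combining $\lim_{\rho\to 0}N_u(x_0,\rho)\geq 4$ with the monotonicity of the frequency $N_u$, whereas you read it off directly from the Parseval expansion $H(\rho)=\sum_{k\geq 1}\|u_k\|^2\rho^{2k}$; your derivation is marginally more elementary since it bypasses Almgren's monotonicity, but the mechanism is the same.
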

\begin{proof}
 Let $y\in Z$ be a such point that $|y-x|=\delta(x)$. Then $u(y)=0$ and  $\lim\limits_{\rho \to 0} N_u(y,\rho) \geq 4$. Therefore $N_u(y,\rho) \geq 4$ for any $\rho \in (0, 1/2)$ since $N_u$ is non-decreasing in $\rho$. 

Assuming that $K>2^s+1$, where $s$ is a positive integer, we get
\begin{align}
\max\limits_{B_{(K-1)\delta(x)}(y)} u^2 \geq \dashint_{B_{(K-1)\delta(x)}(y)} u^2 
\ge 4^{s-1} \dashint_{B_{2\delta(x)}(y)} u^2.
\end{align}
  By the equivalence of norms, we have 
$$\max\limits_{B_{(K-1)\delta(x)}(y)} u^2 \geq b_{1/2}^{-2} 4^{s-1}|u(x)|^2\ge 4|u(x)|^2, $$ 
for $K$ (and $s$) large enough. Hence there exists $\tilde x\in B_{(K-1)\delta(x)}(y)$ such that $|u(\tilde x)|\geq 2 |u(x)|$. Clearly,  $|x-\tilde x| \le |x-y|+|y-\tilde x|\leq K \delta(x)$.
\end{proof}
 We remark that the sign of $u(\tilde x)$ can be opposite to the sign of $u(x)$.

Let $v$ be a given non-constant harmonic function in $B_1$ and let $A$ be the maximum of $|v|$ over $B_{1/4}$. Define
 \[m= \min\limits_{a\in [-A,A], x \in \overline{B}_{1/4}} \dashint_{\partial B_{1/4}(x)} (v-a)^2,\quad M= \max\limits_{a\in [-A,A], x \in \overline{B}_{1/4}} {\dashint_{\partial B_{1/2}(x)} (v-a)^2 }.\]  Since $v$ is a non-constant harmonic function,
 $ m $ is greater than $0$.
 Also note that
 $M < +\infty $.  
Then for any $x\in B_{1/4} $  and $r<1/4$ we get 
\begin{equation} 
N_{v-v(x)}(x,r)\le \frac{\dashint_{\partial B_{1/2}(x)} (v-v(x))^2}{\dashint_{\partial B_{1/4}(x)} (v-v(x))^2}   \leq M/m.
\end{equation}
We call $N_1(v):=\max_{x\in B_{1/4}, r\in(0,1/4)}N_{v-v(x)}(x,r)$ the generalized doubling constant of $v$.

\subsection{\L ojasiewicz exponents}
 The following well-known fact is related to general real analytic functions. 
 For any function $f$, real analytic in $B_1$, with $Z(f) \neq \emptyset$ there exist constants $l, L, \gamma >0$ depending on $f$ such that $$ L \cdot d(x, Z(f)) \geq |f(x)| \geq l \cdot d(x, Z(f))^\gamma$$ for any $x \in B_{1/2}$; we refer the reader to the textbook \cite{KP} or to the original work of S.~\L ojasiewicz \cite{L}.

Later we will apply this fact to a fixed harmonic function with a prescribed zero set. It will be convenient to measure the distance from a point to the zero set by evaluating the harmonic function at this point.

\section{Key estimate}  \label{se:5}
\subsection{Main proposition}
   We fix the nodal set $Z\subset B$ and one harmonic function $v$ such that $Z(v)=Z$, as before $\delta(x)=\delta_v(x)=\dist(x,Z)$. We will assume that $Z\cap B_{1/2}\neq\emptyset$, otherwise the statement of Theorem \ref{H} follows from the classical Harnack inequality.  The aim of this section is the prove the following statement.
 \begin{prop} \label{ML}
   There exist  constants $M=M(Z)>0$ and $c=c(Z)>0$ such that $$\sup\limits_{B_{1/16}} |u| \leq M \sup\limits_{y\in B_{1/2}, \delta(y)\geq c} |u|(y)$$ for any function $u \in H_Z$.
\end{prop}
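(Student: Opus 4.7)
The plan is to iterate the pointwise doubling estimate of Lemma~\ref{AL4} starting from a maximum point of $|u|$ in $\overline{B}_{1/16}$. Let $x_0\in\overline{B}_{1/16}$ realize $A=\sup_{B_{1/16}}|u|$ and set $c_0=(4K)^{-1}$ for $K$ as in Lemma~\ref{AL4}. If $\delta(x_0)\ge c_0$ then $x_0$ already lies in the ``good'' set $\{y\in B_{1/2}:\delta(y)\ge c_0\}$ and the inequality holds with $c=c_0$, $M=1$. Otherwise, I apply Lemma~\ref{AL4} recursively to build a sequence $x_0,x_1,\dots,x_N$ with $|u(x_{j+1})|\ge 2|u(x_j)|$ and $|x_{j+1}-x_j|\le K\delta(x_j)$, continuing as long as $x_j\in B_{1/4}$ and $\delta(x_j)<c_0$. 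If the iteration terminates with $x_N\in B_{1/2}$ and $\delta(x_N)\ge c_0$, then $|u(x_N)|\ge 2^N A\ge A$, so $x_N$ witnesses $\sup_{\mathrm{good}}|u|\ge A=\sup_{B_{1/16}}|u|$, giving the proposition with $M=1$.

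The substantive issue is therefore to guarantee the iteration terminates inside $B_{1/2}$. The Cauchy gradient estimate together with $u|_Z=0$ gives a Lipschitz-type bound $|u(x)|\le \mathrm{Lip}(u)\cdot\delta(x)$ near $Z$. Combined with $|u(x_j)|\ge 2^j A$, this yields $\delta(x_j)\ge 2^j A/\mathrm{Lip}(u)$; since $\delta(x_j)<c_0$ for $j<N$ the number of steps satisfies $N\le\log_2(c_0\mathrm{Lip}(u)/A)+1$, while the total drift obeys $|x_N-x_0|\le K\sum_{j<N}\delta(x_j)\le K N c_0=N/4$.

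The main obstacle is that $\mathrm{Lip}(u)/A$ --- essentially the ratio $\sup_{B_r}|u|/\sup_{B_{1/16}}|u|$ for $r$ slightly larger than $1/16$ --- is not a priori controlled uniformly over $u\in H_Z$. Resolving this is precisely where the fixed reference function $v$ enters, through three tools already in the toolbox: the local division principle (Lemma~\ref{lda}), which says that $u$ and $v$ are comparable in a neighbourhood of each point of $Z$ with constants determined by the common leading Taylor polynomial; the \L ojasiewicz inequality $|v(x)|\ge l\,\delta(x)^\gamma$, which translates geometric distance to $Z$ into analytic information about the reference function; and the generalized doubling constant $N_1(v)$, which is finite and depends only on $Z$. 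Patching the local comparisons over a finite cover of $Z\cap B_{1/2}$ should yield a global doubling-type bound $\sup_{B_r}|u|\le C(Z)\sup_{B_{1/16}}|u|$ uniformly in $u\in H_Z$, after which enlarging $K$ (and thereby shrinking $c_0$) makes $N$, and hence the drift, small enough to keep the entire iteration inside $B_{1/2}$. This gives the proposition with $c=c(Z)$ and $M=M(Z)$.
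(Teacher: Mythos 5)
Your proposal identifies the right starting tool — iterating Lemma~\ref{AL4} — and correctly isolates the crux, namely controlling the total drift $\sum_j K\delta(x_j)$ so the chain stays in $B_{1/2}$. But the mechanism you propose for controlling it has two genuine gaps.

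First, the quantitative bookkeeping does not close. You bound the per-step drift by $K\delta(x_j) \le K c_0 = 1/4$ and the drift by $N/4$; since $c_0 = (4K)^{-1}$, the product $K c_0$ is $1/4$ for \emph{every} choice of $K$, so ``enlarging $K$ (and thereby shrinking $c_0$)'' cannot shrink the per-step drift, and it cannot make the bound $N/4$ small unless $N\le 1$. The only way your scheme terminates safely is if $\delta(x_0)\ge c_0$ at the very start, i.e.\ if no iteration at all is needed. In other words, your Lipschitz bound $|u(x)|\le \mathrm{Lip}(u)\,\delta(x)$ applied at a max point $x_0$ would give $\delta(x_0)\ge A/\mathrm{Lip}(u)$, and if $\mathrm{Lip}(u)/A\le C(Z)$ were known, you could simply take $c = 1/C(Z)$ and be done with $M=1$ — the AL4 iteration becomes unnecessary. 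This internal inconsistency is a signal that the reduction to ``$\mathrm{Lip}(u)/A\le C(Z)$'' is doing all the work and is not actually achievable by the route you sketch: that uniform doubling bound $\sup_{B_r}|u| \le C(Z)\sup_{B_{1/16}}|u|$ over all $u\in H_Z$ is of the same order of difficulty as Proposition~\ref{ML} itself, and ``patching the local comparisons over a finite cover of $Z\cap B_{1/2}$'' is stated as a hope rather than a proof — the local division constants degenerate as you move away from the centre of each chart, and no mechanism is given for passing estimates across nodal domains.

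Second, you miss the paper's central intermediate device, Lemma~\ref{AL3}: for every $x\in B_{1/8}$ there is a point $y\in B_{1/2}$ with $\delta(y)\ge c$ and $|u(y)|\ge |u(x)|\,\delta(x)^\beta$, with $c,\beta$ depending only on $v$ (hence on $Z$). This is proved not by differentiating $u$ but by building a \emph{second} chain $x=x_0,\dots,x_m=y$ using the fixed reference function $v$ (Lemmas~\ref{AL1}--\ref{AL2}, via the generalized doubling constant of $v$ and the \L ojasiewicz inequality for $v$), with steps $|x_{i+1}-x_i|\le \frac34\delta(x_i)$ short enough that each step stays in a single nodal domain — so the classical Harnack inequality transfers the growth of $|v|$ along the chain to a controlled loss for $|u|$. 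Once you normalize $\sup\{|u(y)|: y\in B_{1/2},\ \delta(y)\ge c\}\le 1$, Lemma~\ref{AL3} turns $|u(x_j)|\ge 2^j M_0$ into the \emph{upper} bound $\delta(x_j)\le M_0^{-1/\beta} 2^{-j/\beta}$, which decays geometrically in $j$; this is what makes $\sum_j K\delta(x_j)$ as small as you like by choosing $M_0$ large, and the chain converges inside $B_{1/8}$ with $|u(x_j)|\to\infty$, a contradiction. Your proposal uses the Lipschitz inequality only to get a \emph{lower} bound on $\delta(x_j)$ and hence an upper bound on the number of steps, which is strictly weaker information and, as noted above, insufficient without an additional a priori bound that is itself the heart of the matter.
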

 The constants in the Proposition and in Lemmas below depend  on the dimension and on $v$ (or, equivalently, on $Z$) only, unless otherwise stated. They can be expressed explicitly through  constants depending only on the dimension, the generalized doubling constant $N_1(v)$ and constants in the 
\L ojasiewicz inequalities for $v$.

\subsection{Three lemmas}  We postpone the proof of the proposition and start with auxiliary lemmas.

 \begin{lemma}\label{AL1}
   There exists a constant $C=C(v)>1$ such that for any $x \in B_{1/4}$ with $v(x) \neq 0$ there is $\tilde x \in B_{1/2}$ with $|x-\tilde x| \leq \frac{3}{4}\delta(x)$ and $|v(\tilde x)| \geq C |v(x)|$. 
\end{lemma}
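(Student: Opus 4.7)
The plan is to argue by contradiction via a rescaling and compactness argument, separating a nondegenerate case (where $\delta(x)$ stays bounded below) from a degenerate case (where $x$ approaches $Z$). After possibly replacing $v$ by $-v$, assume $v(x)>0$; then $v>0$ on the ball $B_\rho(x)$ with $\rho:=\delta(x)$, since this ball is disjoint from $Z$. Define
\[V_x(z):=\frac{v(x+\rho z)}{v(x)},\quad z\in B_1(0),\]
which is positive harmonic with $V_x(0)=1$ and $V_x(z_0)=0$ at some $z_0\in\partial B_1$ (namely $z_0=(y-x)/\rho$ for a nearest zero $y\in Z$). The conclusion becomes the scale-invariant statement
\[\sup_{\overline{B}_{3/4}(0)}V_x\ \ge\ C\]
for some $C=C(v)>1$, uniform in $x$.

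Suppose this fails along a sequence $x_n\in B_{1/4}$ with $v(x_n)\ne 0$ and $\sup_{\overline{B}_{3/4}}V_{x_n}\to 1$. Pass to subsequences so that $x_n\to x_*$, $\rho_n:=\delta(x_n)\to\rho_*\in[0,1]$, and nearest zeros $y_n\in Z$ (with $|x_n-y_n|=\rho_n$) converge to $y_*\in Z$. In the nondegenerate case $\rho_*>0$, Harnack's inequality bounds $V_{x_n}$ uniformly on compact subsets of $B_1$; by normal families (after extracting a further subsequence), $V_{x_n}\rightrightarrows V_*$ on compact subsets of $B_1$. This limit $V_*$ is positive harmonic with $V_*(0)=1$ and a boundary zero at $(y_*-x_*)/\rho_*$, hence non-constant, so by the strong maximum principle $\sup_{\overline{B}_{3/4}}V_*>1$ strictly, contradicting $\sup V_{x_n}\to 1$ via uniform convergence on $\overline{B}_{3/4}$.

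In the degenerate case $\rho_*=0$ (so $x_n,y_n\to y_*$), invoke the Taylor expansion at $y_n$ supplied by the local division principle (the paragraph before Lemma \ref{lda}): $v(z)=c_n\,p_n(z-y_n)+O(|z-y_n|^{k_n+1})$, with $p_n=p(y_n,Z)$ a non-zero homogeneous harmonic polynomial of degree $k_n\ge 1$. Upper semicontinuity of the vanishing order, plus compactness in the finite-dimensional space of normalized degree-$k$ homogeneous harmonic polynomials, yields subsequential limits $k_n\equiv k$, $p_n\to p_*$ (non-constant), and $\eta_n:=(x_n-y_n)/\rho_n\to\eta_*\in S^{n-1}$. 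The rescaled functions then satisfy
\[V_{x_n}(w)=\frac{p_n(\eta_n+w)+O(\rho_n)}{p_n(\eta_n)+O(\rho_n)}\ \xrightarrow[n\to\infty]{}\ \frac{p_*(\eta_*+w)}{p_*(\eta_*)}\]
uniformly on $\overline{B}_{3/4}$, provided $p_*(\eta_*)\ne 0$; the limit is a non-constant polynomial in $w$, whose supremum on $\overline{B}_{3/4}$ strictly exceeds $1$, a contradiction.

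The main obstacle is the degenerate case, specifically ruling out $p_*(\eta_*)=0$. This is handled by crucially using that $y_n$ is the \emph{nearest} zero to $x_n$: a direction $\eta_n$ along which $p_n$ vanishes would correspond to a branch of $Z$ emanating from $y_n$, supplying a zero of $v$ strictly closer to $x_n$ than $y_n$, contradicting nearestness. This forces $|p_n(\eta_n)|$ to stay bounded below by a geometric constant depending only on the local structure of $Z$, which passes to the limit to give $p_*(\eta_*)\ne 0$.
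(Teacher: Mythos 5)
Your proposal has a genuine gap in the degenerate case $\rho_*=0$. The claimed uniform convergence $V_{x_n}(w)\to p_*(\eta_*+w)/p_*(\eta_*)$ rests on the assertion that the Taylor remainder contributes $O(\rho_n)$, but after dividing by $v(x_n)$ the remainder is actually $O(\rho_n/c_n)$, where $c_n$ is the leading coefficient in the expansion of $v$ at $y_n$. If $y_n$ tends to a singular point $y_*$ of $Z$ whose vanishing order exceeds $k_n$, then $c_n\to 0$, and $\rho_n/c_n$ need not go to zero. Concretely, take $v(x_1,x_2)=2x_1x_2$ in the unit disc and $x_n=(1/n,1/n)$: here $\rho_n=\delta(x_n)=1/n$, $y_n=(1/n,0)$, $k_n=1$, $p_n(w)=w_2$, $c_n=2/n$, $\eta_n=\eta_*=(0,1)$, and a direct computation gives $V_{x_n}(w)=(1+w_1)(1+w_2)$, which is not $p_*(\eta_*+w)/p_*(\eta_*)=1+w_2$. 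Note that $p_*(\eta_*)=1\neq 0$ in this example, so the obstacle you flag (ruling out $p_*(\eta_*)=0$ via nearestness) is not the real difficulty; the remainder estimate fails even when $p_*(\eta_*)\neq 0$, and the nearest-zero heuristic does not repair it.

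The paper's argument avoids compactness entirely and is direct and quantitative. When $\delta(x)\ge 1/8$ the conclusion is trivial, and when $\delta(x)<1/8$ it uses the generalized doubling constant $N_1(v)$, which is finite for the fixed non-constant $v$ and is scale-uniform by monotonicity of $N_{v-v(x)}(x,\cdot)$. Since $v-v(x)$ attains the value $-v(x)$ at a nearest zero $y\in\partial B_{\delta(x)}(x)$, the doubling bound gives $\max_{\partial B_{\delta(x)/2}(x)}|v-v(x)|\ge c_1 v(x)$ with $c_1$ depending only on $N_1(v)$ and the dimension. Then applying the classical Harnack inequality to the \emph{positive} harmonic function $A-v+v(x)$ on $B_{3\delta(x)/4}(x)$, where $A=\max_{\partial B_{3\delta(x)/4}(x)}(v-v(x))$, transfers this lower bound on the oscillation to a lower bound on the positive part, yielding $\max_{\partial B_{3\delta(x)/4}(x)}v\ge (1+h_{2/3}c_1)v(x)$. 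A rescaling-and-compactness proof could in principle be salvaged, but to control the functions $V_{x_n}$ past $\partial B_1$ (so that the limit actually inherits a zero and is nonconstant) one would need precisely the uniform doubling bound that the paper invokes; as written, your argument lacks this ingredient.
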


The statement looks similar to Lemma  \ref{AL4} but has a very different nature. Here we find the new point $\tilde x$ in the same nodal domain as $x$, the constant $K$ from Lemma \ref{AL4} turns into $3/4$ but now the constant $C$ (that was equal to two in Lemma \ref{AL4}) depends on the function $v$.

\begin{proof}
Consider any $x \in B_{1/4}$.  If  $\delta(x) \ge 1/8$, put $\tilde x$ to be the point on $\partial B_{1/16}(x)$ at which $|v|$ attains the maximal value. Clearly, 
\[C_0(v)=\inf_{x\in B_{1/4}}\frac{\max_{\partial B_{1/16}(x)}|v|}{|v(x)|}>1.\] 
If $\delta(x)<1/8$, then
\begin{multline*}\frac{\max_{\partial B_{\delta(x)}(x)} |v-v(x)|^2}{\max_{\partial B_{\delta(x)/2}(x)} |v-v(x)|^2} \leq c\frac{\dashint_{\partial B_{2\delta(x)}(x)} (v-v(x))^2}{\dashint_{\partial B_{\delta(x)/2}(x)} (v-v(x))^2}\\ \leq c\frac{\dashint_{\partial B_{1/2}(x)} (v-v(x))^2}{\dashint_{\partial B_{1/8}(x)} (v-v(x))^2}\le cN_1(v)^2, \end{multline*}
where $c=b_{1/2}^2$ from the elliptic estimate.
The last two inequalities follow from monotonicity of the doubling constant. 
 That implies $$\max\limits_{\partial B_{\delta(x)}(x)} |v-v(x)|^2 \leq cN_1(v)^2\max\limits_{\partial B_{\delta(x)/2}(x)} |v-v(x)|^2.$$ 
We may assume that $v(x)>0$, then we have
$$ \max_{\partial B_{\delta(x)/2}(x)} |v-v(x)| \geq c_1 {\max_{\partial B_{\delta(x)}(x)} |v-v(x)|} \geq c_1  v(x),$$ 
where $c_1=c^{-1/2}N_1(v)^{-1}$.

 Denote $\max\limits_{\partial B_{\frac{3}{4}\delta(x)}(x)}(v - v(x))$ by $A=A(x)$. Let us show that \[A \geq c_2 \max\limits_{\partial B_{\delta(x)/2}(x)} |v-v(x)|.\] 
Clearly, by the maximum principle, $A\ge \max\limits_{\partial B_{\frac{1}{2}\delta(x)}(x)}(v - v(x))>0$. The function $\tilde v(\cdot):= A - v(\cdot)+v(x)$ is positive on  $B_{\frac{3}{4}\delta(x)}(x)$ 
and therefore, by the Harnack  inequality,  \[A= \tilde v(x) \geq  h_{2/3} {\max\limits_{\partial B_{\delta(x)/2}(x)}  (A -v+v(x))} \geq h_{2/3}{\max\limits_{\partial B_{\delta(x)/2}(x)}  ( -v+v(x))},\] where $0<h_{2/3}<1$ is a constant depending on the dimension only.
 We conclude 
\[ \max\limits_{\partial B_{\frac{3}{4}\delta(x)}(x)} (v - v(x)) =A\geq h_{2/3} \max\limits_{\partial B_{\delta(x)/2}(x)} |v-v(x)| \geq h_{2/3} c_1 v(x).\]  Thus ${\max\limits_{\partial B_{\frac{3}{4}\delta(x)}(x)} v \geq C v(x)}$ with $C= 1 +h_{2/3} c_1$.
\end{proof} 

  Now, assume that $x\in B_{1/8}$ and $v(x)\neq 0$. 
 Then, applying Lemma \ref{AL1} several times, we can construct a finite sequence  $x_0=x, x_1, \dots, x_m$ such that for $i=0,1,...,m-1$
\begin{enumerate}
\item[(i)] $|v( x_{i+1})| \geq C |v(x_i)|$, 
\item[(ii)] $|x_{i+1}-x_i| \leq \frac{3}{4}\delta(x_i)$, 
\item[(iii)] $x_i\in B_{1/4}$ and $x_m \in B_{1/2}\setminus B_{1/4}$.
\end{enumerate}
\begin{lemma}\label{AL2}
 There exists $c=c(v)\in(0,1/2)$ such that for any $x\in B_{1/8}$ and $x_m$ defined above the inequality $\delta(x_m)\geq c(v)$ holds.
\end{lemma}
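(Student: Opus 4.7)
}
The plan is to combine the geometric growth $|v(x_{i+1})|\ge C|v(x_i)|$ from property (i) with the two-sided \L ojasiewicz inequality for $v$ to control both the total length of the chain from below and $\delta(x_m)$ from below. Write the \L ojasiewicz inequalities for $v$ as
\[
L\,\delta(y)\ge |v(y)|\ge l\,\delta(y)^\gamma,\quad y\in B_{1/2},
\]
with constants $L,l,\gamma>0$ depending on $v$ only. Iterating (i) downwards from $m$ gives $|v(x_i)|\le C^{-(m-i)}|v(x_m)|$, and then the upper \L ojasiewicz bound applied to each $x_i$ yields
\[
\delta(x_i)\le\bigl(|v(x_i)|/l\bigr)^{1/\gamma}\le C^{-(m-i)/\gamma}\bigl(|v(x_m)|/l\bigr)^{1/\gamma}.
\]

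Next I would estimate the total length of the polygonal path. By (ii) and the above,
\[
\sum_{i=0}^{m-1}|x_{i+1}-x_i|\le\tfrac34\sum_{i=0}^{m-1}\delta(x_i)\le\tfrac34\bigl(|v(x_m)|/l\bigr)^{1/\gamma}\sum_{j=1}^{m}C^{-j/\gamma}\le\frac{3}{4(C^{1/\gamma}-1)}\bigl(|v(x_m)|/l\bigr)^{1/\gamma},
\]
which is a convergent geometric series since $C>1$. On the other hand, by (iii) we have $x_0\in B_{1/8}$ and $x_m\notin B_{1/4}$, so the total length is at least $|x_m-x_0|\ge 1/8$. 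Combining the two bounds produces a lower estimate
\[
|v(x_m)|\ge l\left(\frac{C^{1/\gamma}-1}{6}\right)^{\!\gamma}=:\varepsilon(v)>0.
\]

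Finally, I would invoke the lower \L ojasiewicz bound once more, in the form $\delta(x_m)\ge |v(x_m)|/L$, to conclude $\delta(x_m)\ge\varepsilon(v)/L=:c(v)$; shrinking $c(v)$ if necessary so that $c(v)<1/2$ finishes the proof. I don't expect any real obstacle here: all the constants $C$, $L$, $l$, $\gamma$ depend on $v$ only, the geometric summation is legitimate because $C>1$ gives $C^{1/\gamma}>1$, and the hypothesis $Z(v)\cap B_{1/2}\neq\emptyset$ ensures that $\delta(\cdot)$ and the \L ojasiewicz inequalities are meaningful on the relevant ball.
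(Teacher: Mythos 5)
Your proof is correct and follows essentially the same route as the paper: iterate (i) to get $|v(x_i)|\le C^{-(m-i)}|v(x_m)|$, feed this into the lower \L ojasiewicz bound to dominate $\delta(x_i)$, sum the resulting geometric series against the lower bound $|x_0-x_m|\ge 1/8$ forced by (iii), and deduce a lower bound on $|v(x_m)|$, hence on $\delta(x_m)$ via the Lipschitz (upper \L ojasiewicz) inequality. The only cosmetic difference is that you spelled out the final passage from $|v(x_m)|\ge\varepsilon(v)$ to $\delta(x_m)\ge\varepsilon(v)/L$, which the paper leaves implicit with the remark ``it is sufficient to show that $|v(x_m)|\ge c(v)$.''
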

\begin{proof}
  First, recall that  there exist positive constants $L$, $l$ and $\gamma\geq 1$ depending on $v$ such that 
 \begin{equation} \label{LE}
L \delta(x) \geq |v(x)| \geq l  \delta^\gamma(x)
\end{equation}
 for any $x \in B_{3/4}$. 
 It is sufficient to show that $|v(x_m)| \geq c(v)$ for some constant $c(v)>0$.

 By (i) we have $|v(x_i)|\leq C^{i-m} |v(x_m)|$ for $i \in \{0, 1, \dots , m\}$.
Then $$\delta(x_i) \leq {l^{-1/\gamma}}|v(x_i)|^{1/\gamma} \leq {l^{-1/\gamma} C^{\frac{i-m}{\gamma}} |v(x_m)|^{1/\gamma}}.$$ 
Since $C>1$, the sum  $S:=\sum_{j=0}^{\infty} C^{\frac{-j}{\gamma}} $ is finite and, by (ii), $$|x_0-x_m|\leq\sum_{i=0}^{m-1}|x_i-x_{i+1}| \leq \sum_{i=0}^{m-1}\delta(x_i)\leq S l^{-1/\gamma}|v(x_m)|^{{1/\gamma}}.$$
 Recall that $x_0 \in B_{1/8}$ and $ x_m \notin B_{1/4}$. It implies $ |x_0-x_m| \geq 1/8$. Thus $ |v(x_m)|\ge c_3$, where $c_3=c_3(v)$.  
\end{proof}
 \begin{lemma} \label{AL3}
   There exists a constant $\beta=\beta(Z)=\beta(v)>0$  such that for any function $u \in H_Z$ and a point $x\in B_{1/8}$ there is a point $y\in B_{1/2}$ such that $\delta(y)\geq c(v)>0$ and $|u(y)| \geq |u(x)| \delta^{\beta}(x) $, where $c(v)$ was defined in Lemma \ref{AL2}.
\end{lemma}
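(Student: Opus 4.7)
The plan is to transport the growth estimate for $v$ encoded in the sequence $x_0=x,x_1,\ldots,x_m$ of Lemmas \ref{AL1}--\ref{AL2} across to $u$, exploiting only the fact that $Z(u)=Z(v)$ and the classical Harnack inequality for constant-sign harmonic functions.

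If $u(x)=0$, then $x\in Z$ and $\delta(x)=0$, so the inequality holds trivially for any $y\in B_{1/2}$ with $\delta(y)\geq c(v)$ (such a $y$ exists by applying Lemma \ref{AL2} to a fixed starting point in $B_{1/8}\setminus Z$). Otherwise $u(x),v(x)\neq 0$, and the sequence is well-defined. For each $i$, the ball $B_{\delta(x_i)}(x_i)$ does not meet $Z=Z(u)$, so $u$ has constant sign on it; applying the classical Harnack inequality to the positive harmonic function $|u|$, together with the bound $|x_{i+1}-x_i|\leq \frac{3}{4}\delta(x_i)$ from (ii), yields $|u(x_{i+1})|\geq h\,|u(x_i)|$ for a dimension-only constant $h=h_{3/4}\in(0,1)$. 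Iterating gives $|u(x_m)|\geq h^m|u(x_0)|$.

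It remains to bound $m$ from above in terms of $\delta(x_0)$, and this is where the only real work lies. By property (i), $|v(x_m)|\geq C^m|v(x_0)|$, and $|v(x_m)|\leq \sup_{B_1}|v|$. Combining with the \L ojasiewicz lower bound $|v(x_0)|\geq l\,\delta(x_0)^\gamma$ from \eqref{LE} gives $m\leq c_1\log(1/\delta(x_0))+c_2$ with $c_1,c_2$ depending on $v$. Substituting produces $|u(x_m)|\geq h^{c_2}\delta(x_0)^{c_1|\log h|}|u(x_0)|$, and absorbing the constant factor into the exponent (using that $\delta(x_0)$ is bounded on $B_{1/8}$) yields $|u(x_m)|\geq |u(x_0)|\delta(x_0)^\beta$ for some $\beta=\beta(v)>0$. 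Taking $y:=x_m$ gives the required point by Lemma \ref{AL2}. The main conceptual point is that even though $u$ may vanish on $Z$ to a very different order than $v$, this is irrelevant: Harnack is only applied on balls disjoint from $Z$, and the length of the sequence is controlled purely by $v$.
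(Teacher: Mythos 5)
Your proof is correct and follows essentially the same route as the paper's: apply the classical Harnack inequality to $u$ along the chain $x_0,\dots,x_m$ built from $v$ in Lemmas \ref{AL1}--\ref{AL2} to get $|u(x_m)|\geq h^m|u(x_0)|$, then bound $m$ from above using the geometric growth of $|v(x_i)|$ together with the \L{}ojasiewicz lower bound $|v(x_0)|\geq l\,\delta(x_0)^\gamma$, and absorb constants into the exponent. The only cosmetic difference is the trivial-case bookkeeping (the paper sets aside $\delta(x)>1/2$ by taking $y=x$, you set aside $u(x)=0$; both are harmless), and the paper bounds $|v(x_m)|$ by $L\delta(x_m)\leq L$ via the \L{}ojasiewicz upper bound rather than $\sup_{B_{1/2}}|v|$, which is the same idea.
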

\begin{proof} 
 We will assume that $\delta(x) \leq 1/2$, otherwise we can take $x=y$.
 Applying the construction from Lemma \ref{AL2} (for the point $x$ and the function $v$), we  put $y=x_m$, then $\delta(y)\geq c>0$ and $y \in B_{1/2}$. 

Recall that, by (ii), $|x_{i+1}-x_{i}|\leq \frac{3}{4} \delta(x_i)$. Since $u$ does not change sign in $B_{\delta(x)}(x)$,  the Harnack inequality implies $|u(x_i)|\leq h_{3/4}^{-1} |u(x_{i-1})|$, where $h_{3/4}$ depends on the dimension only. Therefore  $|u(x)|=|u(x_0)| \leq h_{2/3}^{-m} |u(y)|$.

 By (i), we have $|v(x_m)| \geq C^m |v(x_0)|$ and, by \eqref{LE}, 
\[l \delta^{\gamma}(x_0) \leq|v(x_0)|,\quad |v(x_m)|\leq L \delta(x_m)\le L .\] It shows that $ \delta^{\gamma}(x_0) \leq Ll^{-1}C^{-m}$. We can choose $\beta_1=\beta_1(v)>0$ such that $C^{-\beta_1}\le h_{2/3}$ and then find $\beta_2=\beta_2(v)>0$ for which  $2^{\beta_2} \geq (Ll^{-1})^{\beta_1}$. Since $\delta(x)=\delta(x_0)\leq 1/2$, we  obtain 
\[\delta^{\gamma\beta_1+\beta_2}(x)\leq (Ll^{-1}C^{-m})^{\beta_1}2^{-\beta_2}\le h_{2/3}^{m} .\] Finally, when  $\beta=\gamma\beta_1+\beta_2$, we obtain the required inequality.
\end{proof}
\subsection{Proof of Proposition \ref{ML}}
 Suppose that $u\in H_Z$ and let $c=c(v)$ be as in  Lemma \ref{AL2}, we will prove Proposition \ref{ML} with this $c$. Suppose that  $\sup\{ |u(y)|: y\in B_{1/2}, \delta(y)\geq c\} \leq 1$. We wish to prove that \[\sup \{|u(y)|: y\in B_{1/16}\}\leq M\] for some $M=M(c,Z)>0$. 

Suppose that $x_0 \in B_{1/16}$ and $|u(x_0)|> M_0$. Applying Lemma \ref{AL4} for $x_0$, we can find $x_1$ such that $d( x_1, x_0) \leq K \delta(x_0)$ and $|u(x_1)| \geq 2|u(x_0)|$. Let us consequently employ Lemma \ref{AL4} infinitely many times and find the sequence $\{x_i\}_{i=1}^{\infty}$ with $|u(x_{i+1})| \geq 2|u(x_i)|$ and $d( x_{i+1}, x_i) \leq K \delta(x_i)$. However we may use the lemma for $x_i$ only if $x_i \in B_{1/4}$. Let us show that all $x_i$ are in $B_{1/8}$ if $M_0$  is large enough.

 By Lemma \ref{AL3} if $x_i \in B_{1/8}$, then there is $y_i$ such that $y_i\in B_{1/2}, \delta(y_i)\geq c$ and $|u(x_i)|\leq |u(y_i)| (1/\delta(x_i))^{\beta} \leq (1/\delta(x_i))^{\beta}$. Note that \[|u(x_i)| \geq 2^i |u(x_0)| \geq 2^i M_0.\] We conclude  
\begin{eqnarray*}
\delta(x_i) \leq \frac{1}{|u(x_i)|^{1/\beta}} \leq M_0^{-1/\beta} 2^{-i/\beta},\\
 d( x_{i+1}, x_i) \leq K \delta(x_i) \leq K M_0^{-1/\beta} 2^{-i/\beta}.
\end{eqnarray*}
 It is easy to see that the sum  $\sum\limits_{i=0}^{+\infty} K M_0^{-1/\beta} 2^{-i/\beta}$ is finite. If $M_0$ is sufficiently large, then  $\sum_{i=0}^{\infty}d( x_{i+1}, x_i) < 1/16$. Since $x_0\in B_{1/16}$, all $x_i$ are in $B_{1/8}$ and so does the limit $\lim\limits_{i\to+\infty}x_i=:x_\infty$. Here comes the contradiction with $\lim\limits_{i\to+\infty}|u(x_i)|=+\infty$.   

\section{The Harnack inequality for the ratios} \label{se:6}
 
\subsection{Proof of Theorem \ref{H}} 
 We cover $B_{1/2}$ by a finite number of balls $B_j$ with centers in $B_{1/2}$ and radii $1/32$. For each $B_j$ we denote by $D_j$ the concentric ball of radius $1/2$. Now, we apply Proposition \ref{ML} to the function $v$ and each of the finitely many ball $D_j$ (in place of the unit ball). Taking the maximum of the corresponding constants $M$ and the minimum of the constants $c$, we obtain
$$\sup\limits_{B_{1/2}}|u| \leq M \sup \limits_{y\in B_{3/4},\delta(y)\geq c } |u|(y)  $$ for 
any $u\in H_Z$ and some $c,M$ depending on $Z$ only.

 First, we wish to show that for any $y_0 \in B_{1/2}\setminus Z$ there exists $C_1=C_1(y_0,Z)$ such that $\sup\limits_{B_{1/2}}|u| \leq C_1 |u(y_0)|$ for any $u\in H_Z$. It suffices to establish 
\begin{equation} \label{eq:13}
 \sup\limits_{y\in B_{3/4},\delta(y)\geq c }|u| \leq C_2 |u(y_0)|.
\end{equation}
Let $\Omega_i$, $i=1..k$ be the connected components of  $B_{3/4} \setminus Z$. Put 
\[V_i:= \closure{\Omega_i\cap \{y\in B_{3/4}: \delta(y)\geq c) \}}.\] 
Clearly, $V_i$ is a compact subset of a nodal domain of $u$. Decreasing $c$, if necessary, we may assume that each $V_i$ is non-empty. Fix any points $y_i \in V_i$. By the Harnack inequality, $\sup\limits_{V_i}|u| \leq C_{3,i} |u(y_i)|$. Hence 
\[\sup_{B_{1/2}}|u|\le C\sup\limits_{y\in B_{3/4},\delta(y)\geq c }|u|\leq C_4 \max\{|u(y_1)|,\dots, |u(y_n)|\}.\]   
To establish \eqref{eq:13} we will show that if $\max\{|u(y_1)|,\dots, |u(y_n)|\}=1$, then $|u|(y_0) \geq c_1$ for some $c_1=c_1(Z)>0$. Assume the contrary, suppose there is a sequence of functions $u_i \in H_Z $ such that $\max\{|u_i(y_1)|,\dots, |u_i(y_n)|\}=1$ and $|u_i(y_0)| \to 0$ as $i \to \infty$. 
Since $\sup\limits_{B_{1/2}}|u| \leq C_4$, we can choose a subsequence of $u_i$ to be uniformly converging on compact subsets of $B_{1/2}$. Let $u$ be the pointwise limit of such subsequence in $B_{1/2}$. Then $u$ is a harmonic function in $B_{1/2}$. 
 However $|u_i(y_0)| \to 0$, hence $u(y_0)=0$. Let $y_0$ lie inside of the nodal domain $\Omega_j$. We may assume that all $u_j$ are positive in $\Omega_j$, then the pointwise limit $u$ is non-negative in $\Omega_j$ and $u$ is equal to zero in the interior point $y_0$ of $\Omega_j$. Thus by the strict  maximum principle $u$ is identically zero. Since $\max\{|u_i(y_1)|,\dots, |u_i(y_n)|\}=1$, then $\max\{|u(y_1)|,\dots, |u(y_n)|\}=1$ and $u$ is not identically zero.

 By the contradiction above we have obtained 
\begin{equation} \label{eq:N1}
  |u(y_0)| \leq \sup\limits_{B_{1/2}}|u| \leq C_1(y_0,Z) |u(y_0)|
\end{equation}
 for any $u\in H_Z$.

 Now, let $x_0 \in Z\cap B_{1/2}$. Assume that the homogeneous polynomial expansion of $u$ at $x_0$ starts with a non-zero homogeneous polynomial of order $k$: $u(x)=\sum_{i=k}^{+\infty} p_{i,u}(x-x_0).$  By the local division principle $p_{k,u}=c_u p$, where $p=p(Z,x_0)$. Now, we fix $p$ and wish to show that 
\begin{equation}\label{eq:N2}
  |c_u| \leq C_5(x_0, Z) \sup\limits_{B_{1/2}}|u| \leq C_6(x_0,Z) |c_u|
\end{equation}
 for any $u\in H_Z$. 

 The first inequality is trivial and  follows immediately from the standard Cauchy estimates of derivatives of harmonic functions.  
 The proof of the second inequality is similar to the proof of \eqref{eq:N1}. Assume the contrary, suppose there exist $u_i \in H_Z$, $c_{u_i} \to 0$ as $i\to +\infty$, but $\sup\limits_{B_{1/2}}|u_i|=1$. We may assume that all $u_i$ have the same sign in each component of $B_1\setminus Z$. By \eqref{eq:N1}, we can choose a subsequence of $u_i$ which  normally converges in $B_{1/2}$ to a non-zero harmonic function $u$. Moreover the nodal set of $u$ in $B_{1/2}$ will be $Z\cap B_{1/2}$. The order of vanishing of $u$ at $x_0$ must be $k$ as well. But the normal convergence implies $c_u=0$ and the contradiction is found.

Now, fix a point $y_0 \in B_{1/2}\setminus Z$ and consider functions $\tilde{u},\tilde{v} \in H_Z$ with $\tilde{u}(y_0)=\tilde{v}(y_0)=1$. Let $y$ be an arbitrary point in $B_1$.
 By \eqref{eq:N1} we know that $\sup_{B_1}|\tilde{u}|$ and $\sup_{B_1}|\tilde{v}|$ are not greater than $C_1$. Hence, by the standard Cauchy estimates, we obtain 
$|D^\alpha \tilde{u}(y)|\leq a r^{|\alpha|} \alpha!$ and  $|D^\alpha \tilde{v}(y)|\leq a r^{|\alpha|} \alpha !$ for any multi-index $\alpha$, where $a,r>0$ and depend only on $C_1$ and $y$. Further, the first homogeneous polynomial in the Taylor expansion of $\tilde{v}$ at $y$ is equal to $c_{\tilde v} p$, where $p=p(Z,y)$and the coefficient $|c_{\tilde v}|>c(Z,y)>0$ by \eqref{eq:N2}. Then, applying \eqref{eq:tay} in Lemma \ref{lda} to $f=\frac{\tilde u}{\tilde v}$, we obtain that $|D^\alpha f(y)|\ \leq A_y R_y^{|\alpha|} \alpha !$, where $A_y,R_y$ depend  on $a,r,c(Z,y), p(Z,y)$ only.
 
 Note, that the constants $A_y, R_y$ depend on $y$.
 However, using the real analyticity of $f$, we may conclude that  $|D^\alpha f(x)|\ \leq 2A_y (2R_y)^{|\alpha|} \alpha !$ for any  $x\in B_{\varepsilon}(y)$, where $\varepsilon=\varepsilon(A_y, R_y)$ .   
Further, we may cover $\overline{B}_{1/2}$ by $\bigcup\limits_{y \in \overline{B}_{1/2}} B_{\varepsilon(y)}(y)$ and choose a finite covering $\overline{B}_{1/2} \subset \bigcup_{i=1}^{m} B_{\varepsilon(y_i)}(y_i)$. In each $B_{\varepsilon(y_i)}(y_i)$ we find corresponding $A_i, R_i$ and put $A=\max(A_1, \dots, A_m)$ and $R=\max (R_1, \dots, R_m)$.

 Finally, $|f(y)|\leq A$ for any $y\in B_{1/2}$. If we swap $\tilde u$ and $\tilde v$, we obtain $1/|f(y)|\leq A$.  That gives us the Harnack inequality $\sup\limits_{B_{1/2}}|f| \leq A^2\inf\limits_{B_{1/2}}|f|$  and the gradient estimate $\sup\limits_{B_{1/2}}|D^\alpha f| \leq A^2 R^{|\alpha|}\inf\limits_{B_{1/2}}|f|$.

\subsection{Concluding remarks and questions.}
A very natural question is how one can find (non-trivial) pairs of real-valued harmonic functions with the common zero set. In dimension two the situation is fairly well understood, due to the connections with complex analysis; we refer the reader to \cite{S86, M} for examples and further discussion. In higher dimensions simple examples can be constructed by extending functions of two variables or by applying the Cauchy-Kovalevskaya theorem, see \cite{LM} for details, but it is not clear how to describe all pairs of harmonic functions that share the same zero set. Related questions on hypersurfaces where families of eigenfunctions vanish were recently discussed by J. Bourgain and Z. Rudnick in \cite{BR} and by M. Agranovsky in \cite{A}. A non-trivial example of an infinite family of  harmonic polynomials in dimension four (and some higher dimensions) that vanish on the same set in the unit ball was given in \cite{LM}, by constructing a homogeneous harmonic polynomial of degree two that divides infinitely many linearly independent harmonic polynomials.  To the best of our knowledge, the question if such non-trivial families exist in dimension three is open.

Another question about entire real valued harmonic functions was raised by D. A.  Brannan, W. H. J. Fuchs,  W. K. Hayman  and
             {\"U}.  Kuran in \cite{BFHK}. It is known that every three entire harmonic functions in $\mathbb{R}^2$ that have the same zero set are linearly dependent. In dimension $3$ the last claim is not true, see example in \cite{BFHK}.
 Suppose that $u$ is a harmonic function in $\mathbb{R}^n$ and $\log|u(x)| \leq o(|x|)$ as $|x| \to \infty$. Is it true that any harmonic function in $\mathbb{R}^n$ with the same nodal set as $u$ is a multiple of $u$?
 For instance, it is true and known if $u$ is a homogeneous harmonic polynomial. One can use Theorem \ref{H} or some other way to see that.

 Given an entire harmonic function  in $\mathbb{R}^n$, one can consider its analytic extension to $\mathbb{C}^n$. Theorem \ref{H} shows that if two harmonic functions $u$, $v$ in the unit ball $B\subset \mathbb{R}^n$ have the same zero set $Z$, then their complex zeros coincide in some complex neighborhood of $B$.   Is it true that the zeros in $\mathbb{R}^n$  of a real valued entire harmonic function $u$  uniquely determine its complex zeros in $\mathbb{C}^n$ if $u$ is of exponential type zero? It is not true without assumption on exponential type zero. For instance, $e^x \sin y$ and  $\cosh x \sin y $   have the same real zeros but not the complex zeros. 
 The positive answer to the question from \cite{BFHK} that was formulated above would surely imply the positive answer to the question about complex and real zeros.

\subsection*{Acknowledgments} 

We would like to thank Dan Mangoubi for interesting discussions of the problem.  We are also grateful to Dmitry Khavinson for motivating questions on complex and real zeros of  harmonic functions and related topics.
The work was carried out when the first author visited the Department of Mathematical Sciences of the Norwegian University of Science and Technology and the second author visited Chebyshev Laboratory at St.Petersburg State University. We would like to thank both institutions for hospitality.

\end{document}